\newtheorem{theorem}{Theorem} 
\newtheorem{proposition}{Proposition} 
\newtheorem{remark}{Remark} 
\newtheorem{definition}{Definition} 
\newtheorem{example}{Example}
\newtheorem{main}{Theorem} 
\begin{document}
	
\title[A Piecewise smooth $\lambda$-Lemma]{A Piecewise smooth \boldmath{$\lambda$}-Lemma}

\author[Claudio Buzzi, Paulo Santana, and Luan V. M. F. Silva ]{Claudio Buzzi$^1$, Paulo Santana$^1$, and Luan V. M. F. Silva$^1$}

\address{$^1$ IBILCE--UNESP, CEP 15054--000, S. J. Rio Preto, S\~ao Paulo, Brazil}
\email{claudio.buzzi@unesp.br}
\email{luan.m.silva@unesp.br}
\email{paulo.santana@unesp.br}

\subjclass[2020]{Primary: 37C29. Secondary: 34A36 and 34C37}

\keywords{$\lambda$-Lemma; Inclination Lemma; Piecewise smooth vector fields; Piecewise smooth diffeomorphisms; Homoclinic and heteroclinic connections.}

\begin{abstract}
	In this paper we provide extensions of the $\lambda$-Lemma (also known as Inclination Lemma) for piecewise smooth vector fields and maps. In order to achieve our main result, we investigate the regularity of time-$T$-maps of piecewise smooth vector fields defined at crossing orbits. We prove that these maps are homeomorphisms and also piecewise smooth diffeomorphisms.
\end{abstract}

\maketitle

\section{Introduction}\label{Sec1}
The understanding of a dynamical system often requires identifying the existence of invariant sets in the phase space, such as fixed points, singularities, invariant tori, and periodic orbits. The presence of chaotic sets is equally important in such analysis, as it directly influences the system's predictability. A key tool for studying this phenomenon is determining whether the dynamical system exhibits a hyperbolic structure. In such cases, the classical theory asserts that if there is a transversal intersection between the stable and unstable manifolds of a fixed point (in the case of maps) or a periodic orbit (for vector fields in dimension greater than two), then chaos is present. This result is formalized in the Birkhoff-Smale theorem~\cite{Smale}. Nowadays one of the keystones in the proof the Birkhoff-Smale theorem (see~\cite[Theorem~$5.2.10$]{GukHol1983} and~\cite[Proposition~$2.5$]{New}) is the theorem known as \emph{$\lambda$-Lemma}~\cite{Palis} (and also as \emph{Inclination Lemma}~\cite{Palis2}). 

When dealing with piecewise smooth maps or piecewise smooth vector fields (PSVF for short), determining the existence of chaotic sets through this classical method becomes more difficult. The lack of smoothness complicates the application of traditional (smooth) tools in such analysis.

Therefore the main goal of this paper is to provide an extension of the $\lambda$-Lemma for Poincar\'e maps of PSVF, in particular for those following the \emph{Filippov convention}~\cite{Filippov}, also known as \emph{Filippov vector fields}.

In order to clarify the objects and concepts discussed here, we first introduce them in the classical context before adapting it to our specific setting.

Let $M$ be a compact smooth manifold (i.e. of class $\mathcal{C}^\infty$) of dimension $m$ and without boundary. Let also $F\colon M\to M$ be a $\mathcal{C}^1$-diffeomorphism.

A fixed point $p\in M$ is \emph{hyperbolic} if for every eigenvalue $\alpha\in\mathbb{C}$ of $DFn(p)$ we have $|\alpha|\neq1$. Let $d$ be a Riemmanian metric on $M$. It follows from the Stable Manifold Theorem for diffeomorphisms~\cite{HirPugh} that the \emph{stable and unstable manifolds} 
	\[W^s(x)=\{y\in M\colon d(F^n(x),F^n(y))\to0 \text{ as } n\to\infty\},\]
	\[W^u(x)=\{y\in M\colon d(F^{-n}(x),F^{-n}(y))\to0 \text{ as } n\to\infty\},\]
are well defined immersed submanifolds of $M$. Moreover, if $s=\dim W^s(p)$ and $u=\dim W^u(p)$, then $s+u=m$. We say that $p$ is a \emph{saddle} if $s\geqslant1$ and $u\geqslant1$.

Given $r\in\{1,\dots,m\}$, we say that $D\subset M$ is a \emph{$r$-disk} if it is diffeomorphic to a closed ball of $\mathbb{R}^r$. Given $p\in M$, we say that $D\subset M$ is a $r$-neighborhood of $p$ if $D$ is diffeomophic to an open ball of $\mathbb{R}^r$ and $p\in D$. 

In his seminal work on Morse-Smale dynamical systems, Palis~\cite{Palis} introduced a technical lemma that played a key role in the main results of his paper. Today, this technical result is considered foundational in its own right and it is known as \emph{$\lambda$-Lemma}.
\begin{theorem}[$\lambda$-lemma~\cite{Palis}]
	Let $p$ be a hyperbolic saddle of a $\mathcal{C}^1$-diffeomor\-phism $F\colon M\to M$ and $D\subset W^u(p)$ be a $u$-disk. Let $\Delta\subset M$ be a $u$-disk intersecting $W^s(p)$ transversely at some point $q$. Then $\cup_{n=0}^{\infty}F^n(\Delta)$ contains $u$-disks arbitrarily $\mathcal{C}^1$-close to $D$.
\end{theorem}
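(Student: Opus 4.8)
The plan is to run the classical graph-transform (equivalently, invariant-cone) argument: reduce the global statement to a local one in a neighborhood of $p$, show that forward iterates of a small disk transverse to $W^s(p)$ flatten out onto $W^u_{loc}(p)$ in the $\mathcal{C}^1$ topology, and then transport that conclusion along $W^u(p)$ to reach an arbitrary prescribed $u$-disk $D$. First I would fix adapted $\mathcal{C}^1$ coordinates on a neighborhood $U$ of $p$: since the hypotheses are only $\mathcal{C}^1$ one cannot linearize $F$, but by the Stable Manifold Theorem one can straighten the local invariant manifolds, so that $p=0$, $W^s_{\mathrm{loc}}(p)=(\mathbb{R}^s\times\{0\})\cap U$, $W^u_{\mathrm{loc}}(p)=(\{0\}\times\mathbb{R}^u)\cap U$, and, writing points as $(x,y)\in\mathbb{R}^s\times\mathbb{R}^u$, $F(x,y)=(Ax+\varphi(x,y),\,By+\psi(x,y))$ with $\|A\|\leqslant\lambda<1<\mu\leqslant\|B^{-1}\|^{-1}$ and with $\varphi,\psi$ vanishing together with their first derivatives on the respective coordinate subspaces. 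Shrinking $U$, the nonlinear parts $\varphi,\psi$ have arbitrarily small $\mathcal{C}^1$-norm on $U$.

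Second, I would reduce to a small graph near $p$. Because $q\in W^s(p)$ we have $F^n(q)\to p$, so for some $N$ the $u$-disk $F^N(\Delta)$ contains a point $q_0:=F^N(q)\in W^s_{\mathrm{loc}}(p)$; transversality to $W^s(p)$ is preserved by the diffeomorphism $F$, so $T_{q_0}F^N(\Delta)$ is a complement of $T_{q_0}W^s_{\mathrm{loc}}(p)=\mathbb{R}^s\times\{0\}$, hence it projects isomorphically onto the $\mathbb{R}^u$-factor. Restricting, $F^N(\Delta)$ contains a $u$-disk $\Delta_0$ which is the graph of a $\mathcal{C}^1$ map $\sigma_0\colon B_0\to\mathbb{R}^s$ over a ball $B_0\subset\mathbb{R}^u$; by shrinking $B_0$ I may assume $\|D\sigma_0\|$ is as small as I wish, or at least that $\Delta_0$ lies inside a fixed narrow cone field around the $\mathbb{R}^u$-direction.

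Third — and this is the technical heart — I would show that the graph transform induced by $F$ is eventually a $\mathcal{C}^1$-contraction. Inductively, as long as $\Delta_n=\mathrm{graph}(\sigma_n)$ stays in $U$, a direct chain-rule computation shows $F(\Delta_n)$ contains a graph $\Delta_{n+1}=\mathrm{graph}(\sigma_{n+1})$ over a ball $B_{n+1}$ with: (i) $B_{n+1}$ expanding relative to $B_n$ by roughly the factor $\mu$ (from invertibility of $B$ and smallness of $\psi$), so after finitely many steps its domain fills a fixed $u$-ball around $0$; (ii) the $\mathcal{C}^0$-size of $\sigma_n$ decaying like $\lambda^n$ up to higher-order corrections; and (iii) $\|D\sigma_{n+1}\|\leqslant\theta\|D\sigma_n\|+\varepsilon$ with $\theta<1$ and $\varepsilon$ arbitrarily small, both coming from $\lambda/\mu<1$ and the smallness of $D\varphi,D\psi$ on $U$. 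Hence $\sigma_n\to 0$ in $\mathcal{C}^1$, i.e.\ $\Delta_n$ converges in $\mathcal{C}^1$ to a $u$-disk contained in $W^u_{\mathrm{loc}}(p)$. I expect the main obstacle to be keeping (i)–(iii) mutually compatible: the domains must grow while the disks remain inside $U$ and inside the cone where the derivative estimate is valid, which forces careful bookkeeping of how small $U$, $B_0$, and $\|D\sigma_0\|$ must be chosen, and it is precisely here that the absence of a $\mathcal{C}^1$-linearization has to be absorbed into the higher-order terms.

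Finally, I would transport the local conclusion to $D$. Since $W^u(p)=\bigcup_{k\geqslant 0}F^k\big(W^u_{\mathrm{loc}}(p)\big)$ and $D$ is compact, there is $k$ and a compact $u$-disk $D'\subset W^u_{\mathrm{loc}}(p)$ with $F^k(D')\supset D$. For $n$ large, $\Delta_n$ contains a $u$-disk converging in $\mathcal{C}^1$ to $D'$, so applying the fixed $\mathcal{C}^1$-diffeomorphism $F^k$, which preserves $\mathcal{C}^1$-closeness on compact sets, $F^{N+n+k}(\Delta)\supset F^k(\Delta_n)$ contains a $u$-disk arbitrarily $\mathcal{C}^1$-close to $F^k(D')\supset D$; restricting to the portion near $D$ yields the required disk inside $\bigcup_{n\geqslant 0}F^n(\Delta)$.
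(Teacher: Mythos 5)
Your proposal is correct and follows essentially the same route the paper takes for its piecewise smooth analogue, Theorem~\ref{Main1} (the classical statement itself is only cited to Palis, not reproved): the same Stable-Manifold adapted coordinates, the same recursive contraction estimate forcing the slopes/inclinations to zero, the same expansion in the $u$-direction to fill out the domain, and the same final transport step to reach an arbitrary $u$-disk $D$. The only cosmetic difference is that you package the estimate as a graph transform on $\sigma_n$ while the paper tracks the inclination $|v^s|/|v^u|$ of individual tangent vectors, and you push the flattened local disk forward by $F^k$ to reach $D$ while the paper pulls $D$ back by $P^{-m}$ to a neighborhood of $p$; these are equivalent formulations of the same argument.
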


It follows from the $\lambda$-Lemma that if $\Delta\subset M$ is a sub-manifold of dimension at least $u$ intersecting $W^s(p)$ transversely, then 	
	\[
	W^u(p)\subset\overline{\cup_{n=0}^{\infty}F^n(\Delta)},
	\] 
where $\overline{A}$ denotes the topological closure of the set $A$. Thus, the iterates of $\Delta$ under $F$ are dragged to $p$ (due to the intersection with $W^s(p)$) and at the same time they must approach the entire unstable manifold in the $\mathcal{C}^1$-topology. As a consequence of this result, if $W\subset M$ is an invariant sub-manifold such that $\Delta\subset W$, then $W$ accumulates at the unstable manifold of $p$ (see Figure~\ref{Fig6}). In particular, if $W=W^u(p)$ (resp. $W=W^u(r)$ for some other hyperbolic saddle $r\in M$), then the dynamical system features a \emph{homoclinic} (resp. \emph{heteroclinic}) \emph{tangle} (see Figure~\ref{Fig7}).
\begin{figure}[ht]
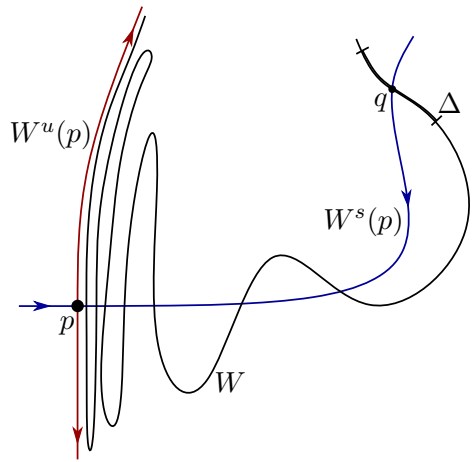

	\begin{center}
		\begin{overpic}[width=6cm]{Fig8.eps} 
			\put(9,29){$p$}
			\put(78,78){$q$}
			\put(92,76.5){$\Delta$}
			\put(67,51){$W^s(p)$}
			\put(-2,70){$W^u(p)$}
			\put(43,15){$W$}
		\end{overpic}
	\end{center}
	\caption{Illustration of the $\lambda$-Lemma. The stable (resp. unstable) manifold of $p$ is represented in blue (resp. red). Colors available in the online version.}\label{Fig6}
\end{figure} 

\begin{figure}[ht]
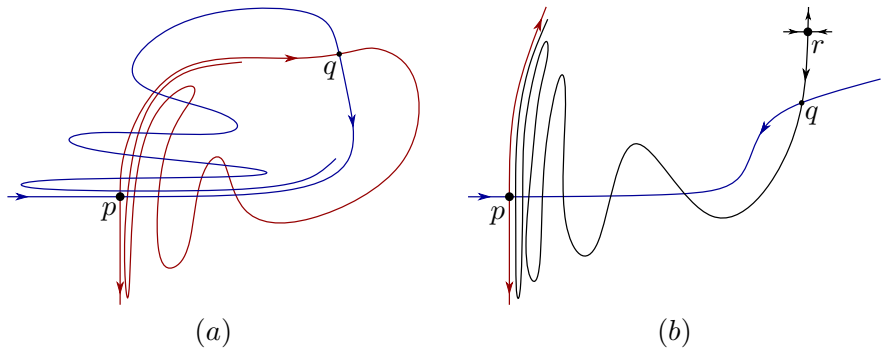

	\begin{center}
		\begin{minipage}{6cm}
			\begin{center} 
				\begin{overpic}[width=5.5cm]{Fig7.eps} 
					\put(23,22){$p$}
					\put(76.5,56){$q$}
				\end{overpic}
				
				$(a)$
			\end{center}
		\end{minipage}
		\begin{minipage}{6cm}
			\begin{center} 
				\begin{overpic}[width=5.5cm]{Fig9.eps} 
				 	\put(5.5,21){$p$}
				 	\put(81.5,45){$q$}
				 	\put(83,61){$r$}
				\end{overpic}
			
				$(b)$
			\end{center}
		\end{minipage}
	\end{center}
	\caption{Illustration of $(a)$ homoclinic $(b)$ heteroclinic tangles. The stable (resp. unstable) manifold of $p$ is represented in blue (resp. red). Colors available in the online version.}\label{Fig7}
\end{figure}

Besides the landmark results of Palis~\cite{Palis} about structural stability of diffeomorphisms and vector fields for $m\leqslant3$, the $\lambda$-Lemma was also applied to obtain simplified proofs of the Grobman-Hartman Theorem (see~\cite[Section~$2.7$]{PalisBook} for vector fields and \cite[Theorem~$2.1$]{Palis} for diffeomorphisms) and the Birkoff-Smale Homoclinic Theorem, see~\cite[Theorem~$5.3.5$]{GukHol1983}. 

Due to its importance the $\lambda$-Lemma was also extended in many ways, including versions for partially hyperbolic tori~\cite{Cre1}, normally hyperbolic invariant manifolds~\cite{Cre2}, locally Lipschitz vector fields~\cite{Pir}, some types of non-hyperbolic points~\cite{Vel}, heat equation~\cite{Web}, and the restrict planar three~\cite{Tere2} and four~\cite{Tere3} body problems.

The paper is organized as follows. Section~\ref{Sec2} provides the main result of the paper, Theorem~\ref{Main1}, dealing with the homoclinic tangle arising from a perturbation of an autonomous piecewise smooth vector field. Section~\ref{Sec3} is devoted to present some preliminaries results about PSVF and their originating Poincar\'e maps. Theorem~\ref{Main1} is proved in Section~\ref{Sec4}. Section~\ref{Sec5} offers further thoughts on generalizations of Theorem~\ref{Main1}, such as the heteroclinic tangle, proved at Theorem~\ref{Main3}, and also the non-perturbative framework, proved at Theorem~\ref{Main4}. In addition to that, it is discussed how the fact that the Poincaré map is \emph{embedded} in the flow of a piecewise smooth vector field is essential for both the statement and the proof of Theorem~\ref{Main1}. As a consequence, we present another version of Theorem~\ref{Main1}, namely Theorem~\ref{Main2}, which does not depend on such embedding.

\section{Statement of the main result}\label{Sec2}

In order to simplify the presentation of the main result, this section focuses on the case of homoclinic entanglement (as illustrated in Figure~\ref{Fig7}$(a)$). The heteroclinic entanglement and the more general cases (depicted in Figu\-res~\ref{Fig7}$(b)$ and \ref{Fig6}, respectively) are addressed in Section~\ref{Sec5}.

Let $h_i\colon\mathbb{R}^n\to\mathbb{R}$, $n\geqslant2$, $i\in\{1,\dots,N\}$, $N\geqslant1$, be smooth functions and set $\Sigma_i=h^{-1}(\{0\})$, where $0$ is not necessarily a regular value. We define the \emph{switching set} $\Sigma=\cup_{i=1}^{N}\Sigma_i$ and let $A_1,\dots, A_k$, $k\geqslant 2$, be the connected components of $\mathbb{R}^n\setminus\Sigma$. For each $j\in\{1,\dots,k\}$ let $X_0^j=X_0^j(x)$ be an autonomous smooth vector fields defined in a neighborhood of $\overline{A_j}$. We say that $X_0^j$ is a \emph{component} of the autonomous piecewise smooth vector field $Z_0:=(X_0^1,\dots,X_0^k)$.

Let $x_0\in\Sigma_i$ be such that $\nabla h_i(x_0)\neq0$ and let us consider $X$ as one of the components of $Z_0$ defined at $x_0$. The \emph{Lie derivative} of $h_i$ in the direction of the vector field $X$ at $x_0$ is given by
	\[Xh_i(x_0):=\left<X(x_0),\nabla h_i(x_0)\right>,\]
where $\left<\cdot,\cdot \right>$ denotes the standard inner product of $\mathbb{R}^n$.

\begin{definition}[Crossing point]\label{Def1}
	Let $Z_0$ be an autonomous piecewise smooth vector field with switching set $\Sigma$. We say that $x_0\in\Sigma$ is a crossing point of $Z_0$ if the following conditions are satisfied:
	\begin{enumerate}[label=(\roman*)]
		\item There is a unique $i\in\{1,\dots,N\}$ such that $x_0\in\Sigma_i$;
		\item $\nabla h_i(x_0)\neq0$;
		\item $X_0^ah_i(x_0)X_0^bh_i(x_0)>0$, where $X_0^a$ and $X_0^b$ are the only two components of $Z_0$ defined at $x_0$.
	\end{enumerate}
\end{definition}

Given a point $x_0\in\mathbb{R}^n$, the dynamics of $Z_0$ are as follows. If $x_0\not\in\Sigma$, then there is a unique $j\in\{1,\dots,N\}$ such that $x_0\in A_j$. Thus, the local trajectory of $Z_0$ at $x_0$ is given by the local trajectory of $X_0^j$, in the classical sense. If $x_0\in\Sigma$ is a crossing point, then it follows from Definition~\ref{Def1} that there are exactly two components, $X_0^a$ and $X_0^b$, of $Z_0$ defined at $x_0$ and that their local trajectories are transversal to $\Sigma$ and agree on orientation. Hence, the local trajectory of $Z_0$ at $x_0$ is defined as the concatenation of the local trajectories of $X_0^a$ and $X_0^b$, following the given orientation, which is determined by $\mbox{sign}(X_0^ah_i(x_0))=\mbox{sign}(X_0^bh_i(x_0))$. 

If $x_0\in\Sigma$ is not a crossing point, that is, if one of the conditions \emph{(i)}, \emph{(ii)} or \emph{(iii)} from Defintion~\ref{Def1} is not satisfied, then other notions of discontinuous points may be taken into account, such as \emph{sliding} and \emph{tangency points}. In such cases, local trajectories of $Z_0$ at $x_0$ are defined but they may not be unique. 

In this paper, roughly speaking, $Z_0$ is allowed to have non-crossing points. But for our purposes every arc of orbit (i.e. an orbit restricted to a compact interval) that we shall work with is supposed to intersect $\Sigma$ only in crossing points. Therefore if $x\in\mathbb{R}^n$ is such that its trajectory intersects $\Sigma$, then we shall assume that there is a compact interval of time such that it does only in crossing points. This enables us to define a global solution $\varphi_0(t;x)$ of $Z_0$, with initial condition $\varphi_0(0;x)=x$, given by the concatenation (in both forward and backward time) of the pieces of solutions of its components $(X_0^1,\dots,X_0^k)$. The arc of orbit of $Z_0$ through $x$ is denoted by $\gamma_x:=\{\varphi_0(t;x):t\in I_x\}$, where $I_x\subset\mathbb{R}$ is a compact interval containing $0$ in its interior and such that $\varphi_0(\cdot;x)$ is well defined for $t\in I_x$ and whenever it crosses $\Sigma$, it does in crossing points.

For the sake of the interested reader, for PSVF with non-crossing points, we refer to~\cites{SanDur2023,SilNun2019} for the cases that does not satisfy conditions $(i)$ or $(ii)$, and to~\cite{CarNovTon2024} for those that does not satisfy condition $(iii)$ of Definition~\ref{Def1}. Since non-crossing points also imply non-uniqueness of solutions, we also refer to~\cite{GomMatVar2023} for a study on the space of orbits, where uniqueness can be restored under particular conditions. For a general approach on defining the dynamics near non-crossing points, see~\cite{PanSil2017}. It is worthy mentioning that when the discontinuity set of the differential system is a regular manifold, the task of describing its solutions becomes significantly simplified, as observed by Filippov~\cite{Filippov} and Guardia et al~\cite{Marcel}. 

Let $p_0\in\mathbb{R}^n\setminus\Sigma$ be a hyperbolic saddle (i.e. a hyperbolic singularity of saddle type) of $Z_0$. Let $W_{loc}^s(p_0)$ and $W_{loc}^u(p_0)$ be the local stable and unstable manifolds of $p_0$, respectively. Since, in general, PSVF does not have uniqueness of solutions, we define the \emph{global} stable and unstable sets $W^{s,u}(p_0)$ as the \emph{saturation} of $W_{loc}^{s,u}(p_0)$ by the flow of $Z_0$, meaning that $W^{s,u}(p_0)$ is the union of all orbits of $Z_0$ having their initial conditions in $W_{loc}^{s,u}(p_0)$.

Now let $q_0\in W^s(p_0)\cap W^u(p_0)\setminus\{p_0\}$ be a homoclinic point of $Z_0$, associa\-ted with a homoclinic orbit $\gamma_{q_0}$, which intersects $\Sigma$ only in crossing points. This implies that the set $\Gamma=\{p_0\}\cup\gamma_{q_0}$ is topologically a circle and its intersection with $\Sigma$ consists solely of crossing points (see Figure~\ref{Fig8}).
\begin{figure}[ht]
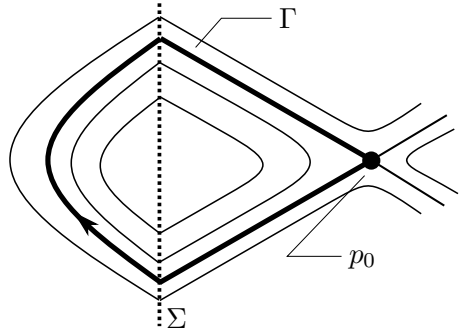

	\begin{center}
		\begin{overpic}[width=6cm]{Fig10.eps} 
			\put(75.5,14.5){$p_0$}
			\put(60,66.5){$\Gamma$}
			\put(35,0){$\Sigma$}
		\end{overpic}
	\end{center}
	\caption{Illustration of the homoclinic connection $\Gamma$ of $Z$, in the case $n=2$.}\label{Fig8}
\end{figure} 

For each smooth component $X_0^j$ of $Z_0$, we consider the vector field $\mathcal{X}_0^j$ given by the product flow 
\begin{equation}\label{12}
	\dot x= X_0^j(x), \quad \dot t=1,
\end{equation}
defined in a neighborhood of $\overline{A_j}\times\mathbb{S}^1$, where $\mathbb{S}^1:=\mathbb{R}/T\mathbb{Z}$ for some $T>0$. By defining $\mathcal{H}_i\colon\mathbb{R}^n\times\mathbb{S}^1\to\mathbb{R}$ as $\mathcal{H}_i(x,t)=h_i(x)$, we observe that \eqref{12} describes a piecewise smooth vector field $\mathcal{Z}_0:=(\mathcal{X}_0^1,\dots,\mathcal{X}_0^k)$ on $\mathbb{R}^n\times\mathbb{S}^1$, with $\Omega=\cup_{i=1}^N\Omega_i$ as its switching set, where $\Omega_i:=\mathcal{H}_i^{-1}(\{0\})=\Sigma_i\times\mathbb{S}^1$. Moreover, if $x\in\Sigma_i$ is a crossing point of $Z_0$ for the components $X_0^a$ and $X_0^b$, then $(x,t)\in\Omega_i$ is a crossing point of $\mathcal{Z}_0$ for $\mathcal{X}_0^a$ and $\mathcal{X}_0^b$, for every $t\in\mathbb{S}^1$. Furthermore, if we let $\Phi_0(\tau;x,t)$ denote the solution of $\mathcal{Z}_0$ with initial condition $\Phi_0(0;x,t)=(x,t)$, then we observe that it can be expressed by 
	\[\Phi_0(\tau;x,t)=(\varphi_0(\tau;x),\tau+t),\]
where we recall that $\varphi_0$ is the solution of $Z_0$. Observe that the hyperbolic saddle point $p_0\in\mathbb{R}^n\setminus\Sigma$ of $Z_0$ now corresponds to a hyperbolic periodic orbit 
\begin{equation}\label{18}
	\gamma_0:=\{(x,t)\in\mathbb{R}^n\times\mathbb{S}^1\colon x=p_0\}\subset (\mathbb{R}^n\times\mathbb{S}^1)\setminus\Omega,
\end{equation}
of saddle type of $\mathcal{Z}_0$ (i.e. $W^{s,u}(\gamma_0)\setminus\gamma_0\neq\emptyset)$. Besides that, the intersection between its stable and unstable sets $W^{s}(\gamma_0)$ and $W^{u}(\gamma_0)$ contains the cylindrical surface $\Gamma\times\mathbb{S}^1$, which intersects $\Omega$ in curves consisting only of crossing points (see Figure~\ref{Fig9}$(a)$).
\begin{figure}[ht]
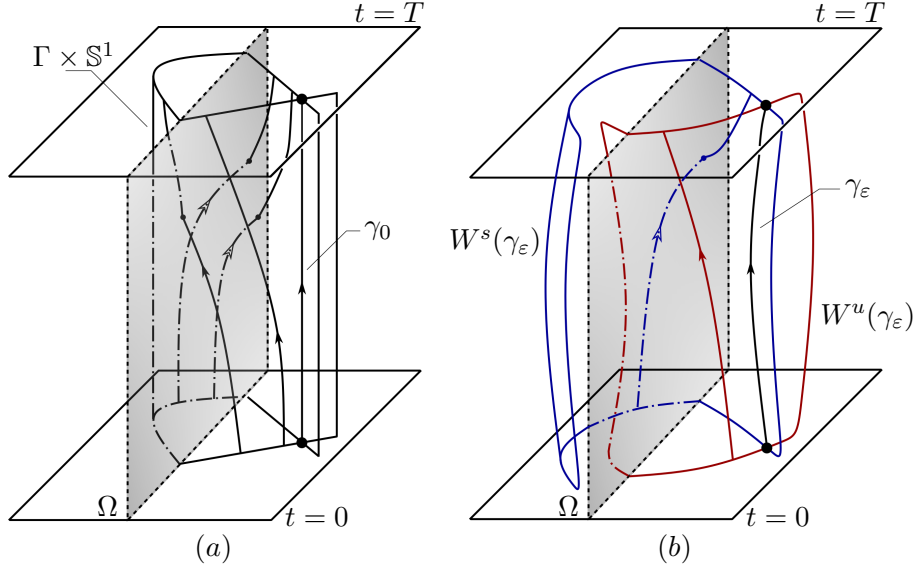

	\begin{center}
		\begin{minipage}{6cm}
			\begin{center} 
				\begin{overpic}[width=5.5cm]{Fig11x.eps} 
					\put(5,92){$\Gamma\times\mathbb{S}^1$}
					\put(72,58){$\gamma_0$}
					\put(18,1){$\Omega$}
					\put(56,-1){$t=0$}
					\put(70,101){$t=T$}
				\end{overpic}
				
				$(a)$
			\end{center}
		\end{minipage}
		\begin{minipage}{6cm}
			\begin{center} 
				\begin{overpic}[width=5.5cm]{Fig12x.eps} 
					\put(76,66.5){$\gamma_\varepsilon$}
					\put(18,1){$\Omega$}
					\put(56,-1){$t=0$}
					\put(70,101){$t=T$}
					\put(71,40){$W^u(\gamma_\varepsilon)$}
					\put(-4,55){$W^s(\gamma_\varepsilon)$}
				\end{overpic}
				
				$(b)$
			\end{center}
		\end{minipage}
	\end{center}
	\caption{$(a)$ Illustrates the cylindrical surface $\Gamma\times\mathbb{S}^1$ and $(b)$ represents the splitting of $W^{s}(\gamma_0)$ and $W^{u}(\gamma_0)$ when $\mathcal{Z}_0$ is perturbed. The stable (resp. unstable) set of $\gamma_\varepsilon$ is represented in blue (resp. red). Colors available in the online version.}\label{Fig9}
\end{figure}

Given an open set $\mathcal{U}_0\subset\mathbb{R}^n\times\mathbb{S}^1$ and $t_0\in\mathbb{S}^1$, we define $U^{t_0}_0:=\mathcal{U}_0\cap\{t=t_0\}$. Since $\Gamma\times\mathbb{S}^1$ intersects $\Omega$ only in crossing points, it follows that there is a neighborhood $\mathcal{U}_0$ of $\Gamma\times\mathbb{S}^1$ such that for each $(x,t)\in\mathcal{U}_0$, the arc of orbit
\begin{equation}\label{19}
	\{\Phi_0(\tau;x,t)\in\mathbb{R}^{n}\times\mathbb{S}^1\colon\tau\in[-T,T]\},
\end{equation}
is well defined and intersect $\Omega$ only in crossing points (in this claim we are also using Proposition~\ref{P1}, a postponed technical result proved in Section~\ref{Sec3}, which states that such an arc of orbit intersects $\Omega$ at most in finitely many points).

This enables us to construct the time-$T$-map (Poincaré map) associated with $\mathcal{Z}_0$, defined on $U^{t}_0$, which essentially tracks where a solution starting in $U^{t}_0$ will be at time $\tau=T$. Therefore, for each $t\in\mathbb{S}^1$, we define the time-$T$-map associated with $\mathcal{Z}_0$ by
\begin{equation}\label{poincare}
	\begin{array}{rccc}
		P^{t}_0: &U^{t}_0&\longrightarrow& V^{t}_0\\
		&(x,t)&\longmapsto &\Phi_0(T;x,t),
	\end{array} 
\end{equation}
with $V^{t}_0:=P^{t}_0(U_0^{t})\subset\mathbb{R}^n\times\{t\}$. We shall see in Section~\ref{Sec3} that $P^{t}_0$ is a homeomorphism. Moreover, it is also a diffeomorphism except possibly at
\begin{equation}\label{20}
	U^{t}_0\cap\left(\Omega^{t}\cup (P^{t}_0)^{-1}(\Omega^{t})\right),
\end{equation}
where $\Omega^{t_0}:=\Omega\cap\{t=t_0\}$. For simplicity, we shall say that $P^{t}_0$ is a \emph{piecewise diffeomorphism} and identify $(x,t)\approx x$ for every $(x,t)\in U^{t}_0$.

\begin{remark}\label{R1}
	We observe that, unlike in the smooth case, given $x\in U^{t}_0$, we cannot apply $P^{t}_0$ infinitely many times on $x$ because the orbit of $x$ may not remain close to $\Gamma\times\mathbb{S}^1$ for any $\tau>0$. In particular, if such an orbit leaves the domain $U^{t}_0$ after some time, we cannot guarantee that it will keep intersecting $\Omega$ only in crossing points. Consequently, we may lose both the uniqueness of solutions and the well-definition of the Poincaré map.

	Since the orbits with initial condition $x\in\Gamma$ are well defined for every $\tau\in\mathbb{R}$ and remains inside $\mathcal{U}_0$ indefinitely, then, for any $n\in\mathbb{N}$, we can restrict $\mathcal{U}_0$, if necessary, so that for every $(x,t)\in\mathcal{U}_0$, the solution $\Phi_0(\tau;x,t)$ is well defined and remains inside $\mathcal{U}_0$ for all $\tau\in[-nT,nT]$. In particular, we can apply the Poincaré map at least $n$ times for every point in its domain. 
\end{remark}

Let $\varepsilon>0$ be sufficiently small, and for each $1\leq j\leq k$, consider the perturbation of $X_0^j(x)$ given by $X_0^j(x)+\varepsilon X_1^j(x,t)$, with $X_1^j$ smooth in $(x,t)$ and $T$-periodic in $t$. This defines a small perturbation of $Z_0$ denoted by
\begin{equation}\label{28}
	Z_{\varepsilon}(x,t):=\bigl(X_0^1(x)+\varepsilon X_1^1(x,t),\dots,X_0^k(x)+\varepsilon X_1^k(x,t)\bigr),
\end{equation}
with solution $\varphi_\varepsilon(\tau;x,t)$ and initial condition $\varphi_\varepsilon(0;x,t)=x$.

Now, proceeding to the extended vector field by taking $\dot{t}=1$, we notice that $\mathcal{Z}_\varepsilon:=(\mathcal{X}_\varepsilon^1,\dots,\mathcal{X}_\varepsilon^k)$, where $\mathcal{X}_\varepsilon^j$ is given by the product flow
\begin{equation}\label{21}
	\dot x=X_0^j(x)+\varepsilon X_1^j(x,t), \quad \dot t=1,
\end{equation}
is a small perturbation of $\mathcal{Z}_0$. Thus, the solution of $\mathcal{Z}_\varepsilon $ with initial condition $\Phi_\varepsilon(0;x,t)=(x,t) $ is given by 
\begin{equation}\label{23}
	\Phi_\varepsilon(\tau;x,t)=(\varphi_{\varepsilon}(\tau;x,t),\tau+t).
\end{equation}
Similarly to~\eqref{19}, we remark that for each $\varepsilon>0$ small enough, there is a neighborhood $\mathcal{U}_\varepsilon$ of $\Gamma\times\mathbb{S}^1$ such that if $(x,t)\in\mathcal{U}_\varepsilon$, then $\Phi_\varepsilon(\tau;x,t)$ is well defined for every $\tau\in[-T,T]$ and intersects $\Omega$ only in crossing points. This implies that, for each $\varepsilon>0$, the Poincaré map $P_\varepsilon^{t}\colon U^{t}_\varepsilon\to V^{t}_\varepsilon:=P_\varepsilon^{t}(U^{t}_\varepsilon)$ associated with $\mathcal{Z}_\varepsilon$ and given by
\begin{equation}\label{Peps}
		P_\varepsilon^{t}(x)=\Phi_\varepsilon(T;x,t)
\end{equation}
is well defined for every $t\in\mathbb{S}^1$. We observe that for each $|\varepsilon|\geqslant0$ small enough, the domain $U^{t_0}_\varepsilon$ of the Poincaré map $P_\varepsilon^{t_0}$ may change. But it is always defined in a neighborhood of $\Gamma\times\{t=t_0\}$. The precise definition of such a domain is postponed to Section~\ref{Sec3.2}.

Since $\gamma_0$ does not intersect $\Omega$, it follows from the application of the Implicit Function Theorem for $P_\varepsilon^{t}$ (see~\cite[Lemma~$4.5.1$]{GukHol1983}) that $\mathcal{Z}_\varepsilon$ has a unique hyperbolic $T$-periodic orbit of saddle type, $\gamma_\varepsilon$, close to $\gamma_0$. In particular, for sufficiently small $\varepsilon>0$, the periodic orbit $\gamma_\varepsilon$ also does not intersect $\Omega$.

Let $p_\varepsilon^{t_0}:=\gamma_\varepsilon\cap\{t=t_0\}$, which is a fixed point of $P_\varepsilon^{t_0}$. From~\cite[Lemmas~$4.5.1$ and~$4.5.2$]{GukHol1983}, $p_\varepsilon^{t_0}$ is a hyperbolic saddle point of $P_\varepsilon^{t_0}$ for all $t_0\in\mathbb{S}^1$, and the local stable and unstable manifolds $W_{loc}^{s,u}(\gamma_\varepsilon)$, associated with $\mathcal{Z}_\varepsilon$, are also well defined and close to the unperturbed stable and unstable manifolds $W_{loc}^{s,u}(\gamma_0)$ (see Figure~\ref{Fig9}$(b)$). Moreover, we shall see in Section~\ref{Sec3} that 
	\[W^{s,u}(p_\varepsilon^{t_0})=W^{s,u}(\gamma_\varepsilon)\cap\{t=t_0\},\]
and $W^{s,u}(p_\varepsilon^{t_0})\cap U^{t_0}_\varepsilon$ (resp. $W^{s,u}(\gamma_\varepsilon)\cap\mathcal{U}_\varepsilon$) are topological manifolds that fails to be smooth at most in a countable number of points (resp. curves). 

It is worth mentioning that, in the classical theory, the distance between the stable and unstable manifolds can be computed by means of a Melnikov procedure, see~\cite[Section~$4.5$]{GukHol1983}. In the non-smooth context, we have, for instance, the works of Shi et al~\cite{Kupper} and Granados et al~\cite{Tere} for the homoclinic and heteroclinic cases, respectively.

In what follows, we present our main result concerning the homoclinic entanglement under transversal conditions for piecewise smooth maps arising from Filippov vector fields. The proof is postponed to Section~\ref{Sec4}. Since all perturbative discussions so far have been conducted for sufficiently small $\varepsilon>0$, we assume the existence of $\varepsilon^*>0$ such that all objects and conditions depending on $\varepsilon$ remain valid for $0<\varepsilon<\varepsilon^*$. 

\begin{main}[$\lambda$-lemma for homoclinic tangles of PSVF]\label{Main1}
	Let $P_\varepsilon^{t}$ denote the time-$T$-map defined in~\eqref{Peps}, and let $p_{\varepsilon}^{t}$ be its fixed point. Suppose that for some $0<\varepsilon_0<\varepsilon^*$ and $t_0\in\mathbb{S}^1$ there exists a $u$-disk $\Delta\subset W^u(p_{\varepsilon_0}^{t_0})\cap U^{t_0}_{\varepsilon_0}$ intersecting $W^s(p_{\varepsilon_0}^{t_0})\cap U^{t_0}_{\varepsilon_0}$ transversely at some point $q_{\varepsilon_0}^{t_0}\in U^{t_0}_{\varepsilon_0}\setminus\Omega^{t_0}$. Then, for every $u$-disk $D\subset W^u(p_{\varepsilon_0}^{t_0})\cap U^{t_0}_{\varepsilon_0}$ the set
		\[\bigcup_{n=0}^{\infty}(P_{\varepsilon_0}^{t_0})^n(\Delta)\]
	contains $u$-disks arbitrarily close in the $\mathcal{C}^1$-topology (resp. in the $\mathcal{C}^0$-topology) to $D$ when $D\cap\Omega^{t_0}=\emptyset$ (resp. when $D\cap\Omega^{t_0}\neq\emptyset$).
\end{main}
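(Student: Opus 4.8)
The plan is to mimic the classical proof of the $\lambda$-Lemma (as in \cite[Section~2.7]{PalisBook} or \cite[Theorem~5.2.10]{GukHol1983}), but carefully tracking two new features: (1) the map $P_{\varepsilon_0}^{t_0}$ is only a \emph{piecewise} diffeomorphism, failing to be smooth on the exceptional set \eqref{20}, and (2) by Remark~\ref{R1} we can only guarantee finitely many iterates on a fixed neighborhood, so we must first shrink domains appropriately. First I would work in a local chart near the hyperbolic fixed point $p_{\varepsilon_0}^{t_0}$ in which $W_{loc}^s$ and $W_{loc}^u$ are coordinate planes $\mathbb{R}^s\times\{0\}$ and $\{0\}\times\mathbb{R}^u$, and in which the map is $C^1$-linearizable (or at least well approximated by its derivative with contraction rate $\lambda<1$ on the stable directions and expansion $\mu>1$ on the unstable directions); since $p_{\varepsilon_0}^{t_0}\notin\Omega^{t_0}$ and, by hypothesis, $q_{\varepsilon_0}^{t_0}\notin\Omega^{t_0}$, a whole neighborhood of the local stable and unstable manifolds near $p_{\varepsilon_0}^{t_0}$ avoids $\Omega^{t_0}$, so on that neighborhood $P_{\varepsilon_0}^{t_0}$ is an honest $C^1$-diffeomorphism and the classical estimates apply verbatim.

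Next, the core of the argument: starting from the transversality of $\Delta$ with $W^s(p_{\varepsilon_0}^{t_0})$ at $q_{\varepsilon_0}^{t_0}$, a finite number of iterates of $P_{\varepsilon_0}^{t_0}$ brings a subdisk $\Delta'\subset\Delta$ through $q_{\varepsilon_0}^{t_0}$ into the linearizing neighborhood $\mathcal{N}$ of $p_{\varepsilon_0}^{t_0}$; this uses Remark~\ref{R1} to ensure the iterates are defined and stay in $\mathcal{U}_{\varepsilon_0}$, and uses that the orbit segment of $q_{\varepsilon_0}^{t_0}$ meets $\Omega^{t_0}$ only in crossing points (hence in finitely many points, by Proposition~\ref{P1}), so along this finite transient $\Delta'$ can be chosen to avoid the exceptional set and the push-forward is $C^1$. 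Inside $\mathcal{N}$, the standard inclination estimate shows that the iterates $(P_{\varepsilon_0}^{t_0})^n(\Delta')$ are graphs over the unstable directions whose tangent planes converge exponentially (rate $\sim(\lambda\mu)^n$ or $\sim\lambda^n$) to the horizontal, i.e. to $T W_{loc}^u$; thus for large $n$ these are $u$-disks $C^1$-close to any prescribed compact piece of $W_{loc}^u(p_{\varepsilon_0}^{t_0})$ lying in $\mathcal{N}$.

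Finally I would propagate from $W_{loc}^u$ to an arbitrary target $u$-disk $D\subset W^u(p_{\varepsilon_0}^{t_0})\cap U^{t_0}_{\varepsilon_0}$. Since $D$ lies in the global unstable manifold, there is $m\in\mathbb{N}$ with $(P_{\varepsilon_0}^{t_0})^{-m}(D)$ contained in $\mathcal{N}\cap W_{loc}^u$ (shrinking $D$ to a subdisk if necessary, which is harmless since $C^1$- or $C^0$-closeness of disks is inherited by enlarging via finitely many forward iterates); then applying $(P_{\varepsilon_0}^{t_0})^m$ to the disks produced in the previous step yields disks accumulating on $D$. Here the dichotomy in the statement appears: if $D\cap\Omega^{t_0}=\emptyset$ then the orbit segment realizing $(P_{\varepsilon_0}^{t_0})^m$ near $D$ can be taken to avoid $\Omega^{t_0}$ for a subdisk containing the bulk of $D$, so $(P_{\varepsilon_0}^{t_0})^m$ is $C^1$ there and $C^1$-closeness is preserved; if $D\cap\Omega^{t_0}\neq\emptyset$, then $(P_{\varepsilon_0}^{t_0})^m$ is only a homeomorphism across the crossing locus — it is $C^0$ but its derivative jumps — so only $C^0$-closeness survives. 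The main obstacle I expect is precisely this last point: controlling what happens when the relevant disks straddle $\Omega^{t_0}$, i.e.\ showing that $P_{\varepsilon_0}^{t_0}$ and its finite iterates are genuine homeomorphisms there (so $C^0$-limits of $u$-disks remain $u$-disks) and that the exceptional set \eqref{20} is small enough — a locally finite union of codimension-$\geq 1$ pieces — that subdisks avoiding it can always be extracted when we need $C^1$ control; both facts should follow from the regularity results for $P_\varepsilon^{t}$ announced for Section~\ref{Sec3} (homeomorphism, piecewise diffeomorphism) together with Proposition~\ref{P1}.
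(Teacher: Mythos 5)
Your proposal follows essentially the same route as the paper: reduce to a local problem near the hyperbolic fixed point, run the classical inclination estimate in Stable-Manifold coordinates, and then propagate from a small disk in $W^u_{loc}$ to an arbitrary target $D$ by finitely many forward iterates, splitting into $\mathcal{C}^1$ vs.\ $\mathcal{C}^0$ closeness according to whether $D$ meets $\Omega^{t_0}$. One point is worth flagging: you worry (in your step 2 and in your closing paragraph) about whether $\Delta'$ can be chosen to dodge the exceptional set $\Omega^{t_0}\cup (P_{\varepsilon_0}^{t_0})^{-1}(\Omega^{t_0})\cup\dots$ along the finite transient — but this is not needed, and in fact this is precisely the subtlety the paper highlights in Section~\ref{Sec5}. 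Proposition~\ref{P2} shows that $P^N_{\varepsilon_0,t_0}$ is a $\mathcal{C}^\infty$-diffeomorphism in a full neighborhood of $q$ as soon as both endpoints $q$ and $P^N(q)$ lie off $\Sigma$, \emph{regardless} of how many times the intermediate orbit crosses $\Sigma$. So the local reduction is automatic from the flow structure; you do not need to select a subdisk avoiding $\Lambda=\cup_n P^{-n}(\Sigma)$, which (as the paper notes for Theorem~\ref{Main2}) could be dense and genuinely problematic in the abstract piecewise-diffeomorphism setting. Once you invoke Proposition~\ref{P2} instead of trying to avoid the exceptional set, the remaining "obstacle" you identify disappears, and your argument is the paper's argument.
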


Since $\Delta\subset W^u(p_{\varepsilon_0}^{t_0})$, it follows from Theorem~\ref{Main1} that $W^u(p_{\varepsilon_0}^{t_0})$ and $W^s(p_{\varepsilon_0}^{t_0})$ become entangled in relation to the $\mathcal{C}^1$-topology at all points except where they intersect $\Omega^{t_0}$. Moreover, even at these intersection points, the entanglement persists in the $\mathcal{C}^0$-topology (see Figure~\ref{Fig12}).
\begin{figure}[ht]
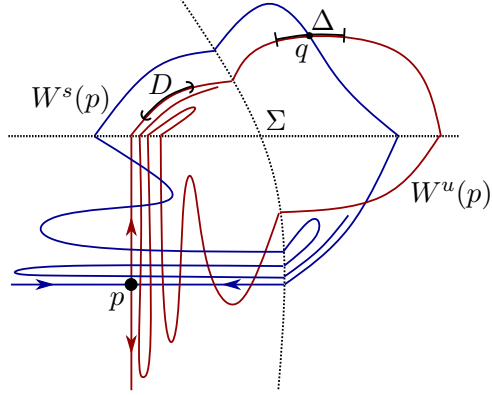

	\begin{center}
		\begin{overpic}[width=6cm]{Fig15.eps} 
			\put(22.5,19){$p$}
			\put(63.5,74){$q$}
			\put(5,63){$W^s(p)$}
			\put(89,42){$W^u(p)$}
			\put(57,58){$\Sigma$}
			\put(67,80){$\Delta$}
			\put(31,66){$D$}
		\end{overpic}
	\end{center}
	\caption{Illustration of Theorem~\ref{Main1}. For simplicity we dropped the symbols $\varepsilon_0$ and $t_0$ in the notation made the identification $\Omega^{t_0}\approx\Sigma$. The stable (resp. unstable) set of $p$ is represented in blue (resp. red). Colors available in the online version.}\label{Fig12}
\end{figure} 

It should be emphasized that the assumptions $D\subset U^{t_0}_{\varepsilon_0}$ and $q_{\varepsilon_0}^{t_0}\in U^{t_0}_{\varepsilon_0}$ are necessary, because we may not have control over $W^{u,s}(p_{\varepsilon_0}^{t_0})$ far away from the homoclinic connection $\Gamma$. More precisely, $U^{t_0}_{\varepsilon_0}$ is the region in which we know that the orbits of $\mathcal{Z}_\varepsilon$ intersects $\Omega$ only in crossing points. Therefore, the uniqueness of solution is preserved and the Poincaré map remain well-defined. Without the assumption $D\subset U^{t_0}_{\varepsilon_0}$, sliding motion may occur, potentially leading to the loss of the well-definition of the Poincaré map.

Moreover, as we shall see in Section~\ref{Sec3}, for each $\varepsilon\in(0,\varepsilon^*)$, any two Poincaré maps $P_\varepsilon^{t_1}$ and $P_\varepsilon^{t_2}$ are topologically conjugated with the conjugation $\Phi_\varepsilon^{t_2-t_1}(\cdot):=\Phi_\varepsilon(t_2-t_1;\cdot)$ being also a piecewise smooth diffeomorphism. Hence we obtain the following result, whose proof is deferred to Section~\ref{Sec4}.

\begin{proposition}\label{MainCoro}
	If $W^u(p_{\varepsilon_0}^{t_0})$ and $W^s(p_{\varepsilon_0}^{t_0})$ intersects transversely at some point $q_{\varepsilon_0}^{t_0}\in U^{t_0}_{\varepsilon_0}\setminus\Omega^{t_0}$ for some $t_0\in\mathbb{S}^1$, then for all $t\in\mathbb{S}^1$, except possibly for finitely many, the sets $W^u(p_{\varepsilon_0}^{t})$ and $W^s(p_{\varepsilon_0}^{t})$ intersect transversely at some point $q_{\varepsilon_0}^{t}\in U^{t}_{\varepsilon_0}\setminus\Omega^{t}$.
\end{proposition}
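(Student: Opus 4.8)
The plan is to exploit the flow $\Phi_\varepsilon$ of the extended vector field $\mathcal{Z}_\varepsilon$ as the conjugating map between the Poincaré sections at different times. Fix $\varepsilon_0$ and write $p^{t} := p_{\varepsilon_0}^{t}$, $U^{t} := U^{t}_{\varepsilon_0}$, etc. For $t_1,t_2\in\mathbb{S}^1$, consider the time-$(t_2-t_1)$-map $\Phi_{\varepsilon_0}^{t_2-t_1}(\cdot) = \Phi_{\varepsilon_0}(t_2-t_1;\cdot)$ restricted to $U^{t_1}$. First I would recall from Section~\ref{Sec3} (as announced in the paragraph preceding the statement) that, for $\varepsilon_0$ fixed, any two Poincaré maps $P_{\varepsilon_0}^{t_1}$ and $P_{\varepsilon_0}^{t_2}$ are topologically conjugate via $\Phi_{\varepsilon_0}^{t_2-t_1}$, and that this conjugating map is itself a piecewise smooth diffeomorphism whose non-smoothness locus is contained in $\Omega^{t_1}\cup(\Phi_{\varepsilon_0}^{t_2-t_1})^{-1}(\Omega^{t_2})$ — one must only take care that the time-$(t_2-t_1)$-orbit segments stay inside the neighborhood where crossings are all transversal, which holds up to shrinking $\mathcal{U}_{\varepsilon_0}$ as in Remark~\ref{R1}. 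Since $\gamma_{\varepsilon_0}$ avoids $\Omega$, the map $\Phi_{\varepsilon_0}^{t_2-t_1}$ sends $p^{t_1}$ to $p^{t_2}$ and, being a flow conjugacy between the two time-$T$ maps, sends $W^{s,u}(p^{t_1})$ onto $W^{s,u}(p^{t_2})$.

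Next I would track the transversal homoclinic point. Given the point $q^{t_0}\in U^{t_0}\setminus\Omega^{t_0}$ at which $W^u(p^{t_0})$ and $W^s(p^{t_0})$ meet transversally and both are locally smooth $u$-disks, set $q^{t} := \Phi_{\varepsilon_0}^{t-t_0}(q^{t_0})$ for $t$ near $t_0$. Because the conjugacy $\Phi_{\varepsilon_0}^{t-t_0}$ is a diffeomorphism on a neighborhood of $q^{t_0}$ — precisely when the orbit segment from $q^{t_0}$ to $q^{t}$ avoids $\Omega$, equivalently when $q^{t_0}\notin\Omega^{t_0}$ and $q^{t}\notin\Omega^{t}$, and when no intermediate crossing of $\Omega$ happens along the way — it maps a transversal intersection of two smooth submanifolds to a transversal intersection of two smooth submanifolds. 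Thus $W^u(p^{t})$ meets $W^s(p^{t})$ transversally at $q^{t}\in U^{t}$. It remains to show this happens for all but finitely many $t\in\mathbb{S}^1$.

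The obstruction set is exactly the set of times $t$ for which the orbit arc $\{\Phi_{\varepsilon_0}(\tau; q^{t_0}) : \tau\in[0, t-t_0]\}$ (read cyclically on $\mathbb{S}^1$) meets $\Omega$. But $q^{t_0}\in W^s(p^{t_0})\cap W^u(p^{t_0})$ is a homoclinic point, so its full orbit under $\Phi_{\varepsilon_0}$ is the orbit of the homoclinic point, which is biasymptotic to $\gamma_{\varepsilon_0}$; by Proposition~\ref{P1} applied to this orbit (and using that it must intersect $\Omega$ only in crossing points, by hypothesis on the homoclinic connection and its persistence under perturbation) the orbit meets $\Omega$ in at most finitely many points, say at times $\tau_1,\dots,\tau_m$ (mod $T$). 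Hence the set of "bad" times $t$ — those for which the arc from $t_0$ to $t$ passes through one of these crossing instants, i.e.\ $t\equiv t_0+\tau_j \pmod{T}$ — is finite, equal to $\{t_0+\tau_j \bmod T : j=1,\dots,m\}$. For every $t$ outside this finite set, the argument of the previous paragraph applies and yields the transversal intersection at $q^{t}\in U^{t}\setminus\Omega^{t}$. The main obstacle I anticipate is bookkeeping the domain issue: one must verify that for $t$ in the good set the whole orbit arc from $q^{t_0}$ to $q^{t}$ not only avoids $\Omega$ but stays inside $\mathcal{U}_{\varepsilon_0}$ so that the Poincaré maps and the conjugacy are genuinely defined there; this is handled by invoking Remark~\ref{R1} to pre-shrink $\mathcal{U}_{\varepsilon_0}$ so that orbits starting near $\Gamma\times\mathbb{S}^1$ remain inside for all $\tau\in[-T,T]$ (one full period suffices, since $\mathbb{S}^1=\mathbb{R}/T\mathbb{Z}$), together with the fact that $q^{t_0}$, lying on the homoclinic manifold inside $U^{t_0}$, has its relevant orbit arc confined to $\mathcal{U}_{\varepsilon_0}$.
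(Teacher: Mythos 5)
Your overall route is the paper's route: use the flow $\Phi_{\varepsilon_0}^{t-t_0}$ as a conjugacy between $P_{\varepsilon_0}^{t_0}$ and $P_{\varepsilon_0}^{t}$ (Proposition~\ref{P5}), push $q^{t_0}$ forward to $q^t := \Phi_{\varepsilon_0}^{t-t_0}(q^{t_0})$, argue that the conjugacy is a local diffeomorphism near $q^{t_0}$ for all but finitely many $t$, and conclude because diffeomorphisms preserve transversal intersections. However, there is a genuine error in your characterization of when the conjugacy is a diffeomorphism, and it propagates into an internally inconsistent description of the obstruction set.

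You write that $\Phi_{\varepsilon_0}^{t-t_0}$ is a diffeomorphism near $q^{t_0}$ ``when the orbit segment from $q^{t_0}$ to $q^t$ avoids $\Omega$'' and ``when no intermediate crossing of $\Omega$ happens along the way.'' This is too strong and is \emph{not} what Proposition~\ref{P2} says. Proposition~\ref{P2} asserts that $\Phi^{T}$ is a $\mathcal{C}^\infty$-diffeomorphism near $x$ as soon as $x\notin\Sigma$, $\varphi(T;x)\notin\Sigma$, and the arc between meets $\Sigma$ \emph{only in crossing points}; intermediate crossings are explicitly allowed and cause no loss of smoothness, since the time-$T$-map factors through smooth hitting-time maps across each transversal crossing. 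If your stronger criterion were the right one, the set of ``good'' $t$ would be the single open arc in $\mathbb{S}^1$ emanating from $t_0$ up to the first crossing time $\tau_1$, not all-but-finitely-many — i.e.\ your stated obstruction set (``those $t$ for which the arc from $t_0$ to $t$ meets $\Omega$'') is an interval, not a finite set. The subsequent ``i.e.\ $t\equiv t_0+\tau_j$'' does not follow from that description; it is in fact the \emph{correct} obstruction set, namely $\{t : q^t\in\Omega^t\}$, but it is obtained here by an unjustified replacement rather than from the criterion you stated. The fix is simply to invoke Proposition~\ref{P2} with the correct (weaker) hypothesis: since $q^{t_0}\notin\Omega^{t_0}$ is fixed, $\Phi_{\varepsilon_0}^{t-t_0}$ fails to be a local diffeomorphism near $q^{t_0}$ only when the endpoint $q^{t}$ lands in $\Omega^{t}$; by Proposition~\ref{P1} applied to the arc $\{\Phi_{\varepsilon_0}(\tau;q^{t_0}):\tau\in[0,T]\}$ this happens for at most finitely many $t$. (As a side note, Proposition~\ref{P1} is a statement about a compact time interval; you should apply it to one period $[0,T]$, not to the ``full orbit'' of the homoclinic point, which over $\mathbb{R}$ may cross $\Omega$ infinitely often.)
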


In other words, given $\varepsilon_0\in(0,\varepsilon^*)$, it follows from Proposition~\ref{MainCoro} that if some Poincaré map $P_{\varepsilon_0}^{t_0}$ satisfies the hypothesis of Theorem~\ref{Main1}, then all but finitely many Poincaré maps $P_{\varepsilon_0}^{t}$ also satisfy it. Geometrically this means that if $W^s(p_{\varepsilon_0}^{t})$ and $W^u(p_{\varepsilon_0}^{t})$ are entangled for some $t\in\mathbb{S}^1$, then $W^s(\gamma_{\varepsilon_0})$ and $W^u(\gamma_{\varepsilon_0})$ are also entangled (see Figure~\ref{Fig13}). 
\begin{figure}[ht]
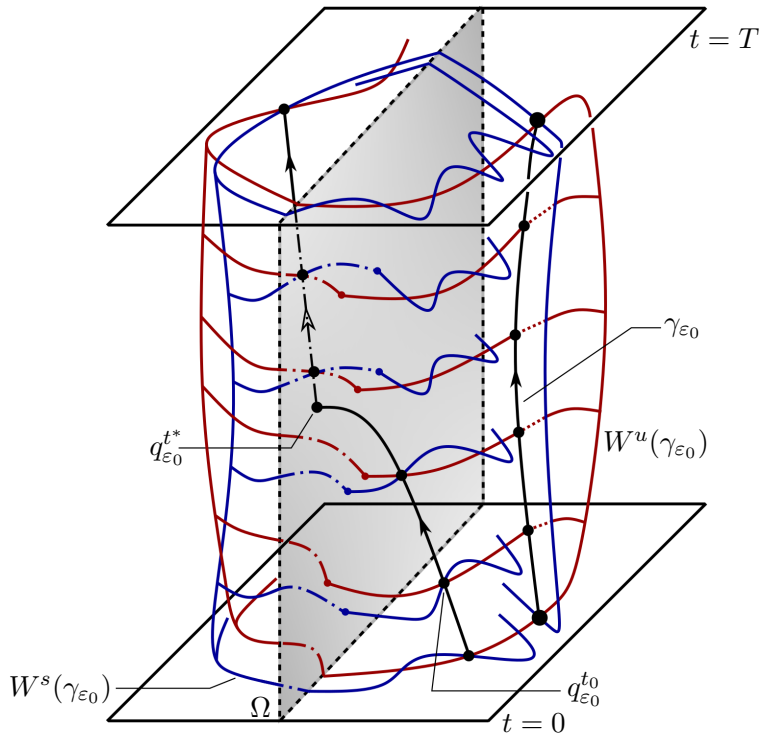

	\begin{center}
		\begin{overpic}[width=8cm]{Fig16.eps} 
			\put(77.5,54.75){$\gamma_{\varepsilon_0}$}
			\put(20,1){$\Omega$}
			\put(55,-1){$t=0$}
			\put(81,94){$t=T$}
			\put(69,38){$W^u(\gamma_{\varepsilon_0})$}
			\put(-13.5,4){$W^s(\gamma_{\varepsilon_0})$}
			\put(64,4){$q_{\varepsilon_0}^{t_0}$}
			\put(6,37.5){$q_{\varepsilon_0}^{t^*}$}
		\end{overpic}
	\end{center}
	\caption{Illustration of the entanglement between $W^s(\gamma_{\varepsilon_0})$ (in blue) and $W^s(\gamma_{\varepsilon_0})$ (in red). Colors available in the online version.}\label{Fig13}
\end{figure}

For an illustration of the smooth version of this entanglement, we refer to the beautiful Figure~$3.40$ of~\cite[p.~$175$]{Arrow}.

Additionally, from the proof of Proposition~\ref{MainCoro}, it becomes clear that if the intersection fails to be transversal for some $t^*\in\mathbb{S}^1$, then $q_{\varepsilon_0}^{t^*}\in\Omega^{t^*}$ (see Figure~\ref{Fig13}).

\section{Preliminaries results}\label{Sec3}

\subsection{Piecewise smooth vector fields}\label{Sec3.1}

In this sub-section, we establish some technical results concerning PSVF. For sake of generality, we suppose only that the vector fields treated are autonomous, such as $\mathcal{Z}_0$ and $\mathcal{Z}_\varepsilon$ in Section~\ref{Sec2} (see~\eqref{12} and \eqref{21}), but not necessarily given by a product flow, such as the vector fields that shall be treated at Section~\ref{Sec5}. 

Let $Z$ be an autonomous piecewise smooth vector field on $\mathbb{R}^n$, $n\geqslant2$, with components $(X_1,\dots,X_k)$ and switching set $\Sigma=\cup_{i=1}^{N}\Sigma_i$ with $\Sigma_i=h^{-1}_i(\{0\})$, with $h_i\colon\mathbb{R}^n\to\mathbb{R}$ being a smooth function.

As mentioned in Section~\ref{Sec2}, in this paper we are not dealing  with non-crossing points in $\Sigma$. Therefore if $x\in\mathbb{R}^n$ is such that its trajectory intersects $\Sigma$, then we shall assume that there is a compact interval of time such that it does only in crossing points. This enables us to define a global solution $\varphi(t;x)$ of $Z$, with initial condition $\varphi(0;x)=x$, given by the concatenation (in both forward and backward time) of the pieces of solutions of its components $(X_1,\dots,X_k)$. The orbit of $Z$ through $x$ is denoted by $\gamma_x:=\{\varphi(t;x):t\in I_x\}$, where $I_x\subset\mathbb{R}$ is a compact interval containing $0$ in its interior and such that $\varphi(\cdot;x)$ exists for every $t\in I_x$ and whenever it crosses $\Sigma$, it does in crossing points. This implies that for each $T\geqslant0$ (resp. $T\leqslant0$), if $x\in\mathbb{R}^n$ is such that $[0,T]\subset\operatorname{Int}(I_x)$ (resp. $[T,0]\subset\operatorname{Int}(I_x)$), where $\operatorname{Int}(I)$ denotes the topological interior of $I$, then there exists a neighborhood $A^T\subset\mathbb{R}^n$ of $x$ such that the time-$T$-map $\Phi^T\colon A^T\to\mathbb{R}^n$, given by $\Phi^T(r)=\varphi(T;r)$, is well defined. In particular, if $T=0$, then $\Phi^0$ is the identity map.

In order to simplify the writing, let $(Z,\Sigma,\varphi)$ denote an autonomous piecewise smooth vector field $Z$ with switching set $\Sigma$ and global solution $\varphi$. Then, given $T\in\mathbb{R}$, we denote by $\Phi^T$ the time-$T$-map $\Phi^T(x):=\varphi(T;x)$, whenever it is well defined. 

In the following proposition we verify that crossing solutions intersect the switching finitely many times in compact time intervals.

\begin{proposition}\label{P1}
	Consider $(Z,\Sigma,\varphi)$, $x\in\mathbb{R}^n\setminus\Sigma$ and $T>0$ such that $\varphi(T,x)\not\in\Sigma$. If $\{\varphi(t;x):t\in[0,T]\}$ intersects $\Sigma$ only in crossing points, then such an intersection is finite.
\end{proposition}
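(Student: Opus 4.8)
The plan is to argue by contradiction using a compactness argument. Suppose that the orbit arc $K := \{\varphi(t;x) : t \in [0,T]\}$ meets $\Sigma$ in infinitely many points. Since $K$ is the continuous image of the compact interval $[0,T]$, it is compact, and the set of crossing times $S := \{t \in [0,T] : \varphi(t;x) \in \Sigma\}$ is therefore a bounded infinite subset of $[0,T]$; hence it has an accumulation point $t^* \in [0,T]$, and $t^* \in S$ because $\Sigma$ is closed (each $h_i$ being continuous) and $\varphi(\cdot;x)$ is continuous. Let $y := \varphi(t^*;x)$. By hypothesis $y$ is a crossing point, so by Definition~\ref{Def1} there is a unique index $i$ with $y \in \Sigma_i$, we have $\nabla h_i(y) \neq 0$, and exactly two components $X_a, X_b$ of $Z$ are defined near $y$ with $X_a h_i(y)\, X_b h_i(y) > 0$.

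Next I would localize. Since $\nabla h_i$ is continuous and nonzero at $y$, and the two Lie derivatives $X_a h_i$, $X_b h_i$ are continuous and have the same nonzero sign at $y$, there is a ball $B = B(y,\delta)$ on which: (a) $B \cap \Sigma = B \cap \Sigma_i$ (the other $\Sigma_j$ stay away from $y$, as $y \notin \Sigma_j$ and $\Sigma_j$ is closed), (b) $\nabla h_i \neq 0$ on $B$ so $\Sigma_i \cap B$ is a smooth hypersurface, and (c) both $X_a h_i$ and $X_b h_i$ keep the sign $\sigma := \operatorname{sign}(X_a h_i(y)) = \operatorname{sign}(X_b h_i(y))$ on $B$. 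Then I consider the scalar function $g(t) := h_i(\varphi(t;x))$ for $t$ near $t^*$. While the trajectory lies in the region where the active component is $X_a$ (resp. $X_b$), $g$ is $\mathcal{C}^1$ with $g'(t) = X_a h_i(\varphi(t;x))$ (resp. $X_b h_i(\varphi(t;x))$), which has constant sign $\sigma$; and at a crossing instant the trajectory passes transversally from one side to the other, so the one-sided derivatives of $g$ at such an instant both equal $\sigma \cdot (\text{positive})$. Consequently $g$ is continuous on a neighborhood of $t^*$ and strictly monotone there (strictly increasing if $\sigma > 0$, strictly decreasing if $\sigma < 0$), since its one-sided derivatives are bounded away from zero with a fixed sign on the whole sub-interval of times for which $\varphi(t;x) \in B$.

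A strictly monotone function can vanish at most once, so $g(t) = 0$ for at most one $t$ in that neighborhood of $t^*$. But $\varphi(t;x) \in \Sigma_i \cap B$ exactly when $g(t) = 0$, for $t$ with $\varphi(t;x) \in B$; by continuity there is $\eta > 0$ with $\varphi(t;x) \in B$ for all $t \in (t^*-\eta, t^*+\eta)$, so $S \cap (t^*-\eta, t^*+\eta)$ has at most one element. This contradicts $t^*$ being an accumulation point of $S$. Hence $S$ is finite, proving the proposition. I expect the main obstacle to be the careful bookkeeping in the localization step: one must ensure $\delta$ is small enough that no other switching manifold $\Sigma_j$ interferes near $y$, that the transversality of the crossing persists uniformly on $B$, and—slightly delicately—that along the trajectory only the two components $X_a, X_b$ can ever be active inside $B$, so that $g'$ genuinely has constant sign on the relevant time interval rather than merely at isolated instants; the monotonicity conclusion then follows from the standard fact that a continuous function with one-sided derivatives of fixed sign is strictly monotone.
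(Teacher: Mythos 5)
Your proof is correct, and it reaches the contradiction by a genuinely different mechanism than the paper's. Both proofs start identically: extract the accumulation time $t^*$, note that $y=\varphi(t^*;x)$ must lie in $\Sigma$ and hence be a crossing point, and localize in a ball $B$ around $y$ meeting only one switching sheet $\Sigma_i$. From there the paper works in phase space: it passes to flow-box coordinates for one of the two active components $X^-$, and combines continuous dependence on initial conditions with the Flow Box Theorem to show that a point in $A\cap\Sigma$ flows into $(U\cap\Sigma^-)\setminus\Sigma$ for all small positive times, so consecutive crossing times $t_k, t_{k+1}$ with $t_{k+1}-t_k\to 0$ are impossible. You instead collapse the picture to the scalar function $g(t)=h_i(\varphi(t;x))$ and observe that on the time window where the trajectory stays in $B$, both one-sided derivatives of $g$ equal the Lie derivative of the currently active component, which by your shrinking of $B$ has a fixed sign bounded away from zero; hence $g$ is strictly monotone and can vanish at most once, contradicting accumulation. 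Your route is somewhat more elementary — it replaces the Flow Box Theorem and the explicit continuous-dependence estimate with a one-variable monotonicity fact — while the paper's flow-box viewpoint is arguably closer to the geometric picture that reappears in Propositions~\ref{P2} and~\ref{P3}. The one point to be careful about, which you already flagged, is that the ball $B$ must be shrunk enough so that $B\setminus\Sigma_i$ has exactly two components, each contained in a single $A_j$, so that only $X_a$ and $X_b$ can ever be active while the trajectory stays in $B$; with that done, your argument is sound.
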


\begin{proof}
	Suppose by contradiction that $\{\varphi(t;x):t\in[0,T]\}\cap\Sigma$ is not finite. Then there exists an increasing sequence $(t_k)_{k\in\mathbb{N}}\subset[0,T]$ such that $\varphi(t_k;x)\in\Sigma$ for every $k\in\mathbb{N}$. Let 
		\[t^*:=\lim\limits_{k\to\infty}t_k=\sup\{t_k\colon k\in\mathbb{N}\}<T,\]
	and observe that $t_k-t_{k-1}\to0$ as $k\to\infty$. Let $x_k=\varphi(t_k;x)$, $k\in\mathbb{N}$, and $x^*=\varphi(t^*;x)$. Since $\Sigma$ is a closed set, it follows that $x^*\in\Sigma$ and thus by hypothesis it is also a crossing point.
	
	Observe that we can take a local system of coordinates in a neighborhood $W\subset\mathbb{R}^n$ of $x^*$ such that in $W$ we have $\Sigma=h^{-1}(\{0\})$ and $Z=(X^+,X^-)$, with $X^\pm$ defined in a neighborhood of the region
		\[\Sigma^\pm=\{x\in\mathbb{R}^n\colon \pm h(x)\geqslant0\},\]
	for some $\mathcal{C}^\infty$-function $h\colon W\to\mathbb{R}$. Suppose for definiteness that the local trajectory of $Z$ through $x^*$ enters $\Sigma^-$. Restricting $W$, if necessary, we can assume that the local trajectory of $Z$ through every $r\in W\cap\Sigma$ also enters $\Sigma^-$ (see Figure~\ref{Fig5}).
	\begin{figure}[ht]
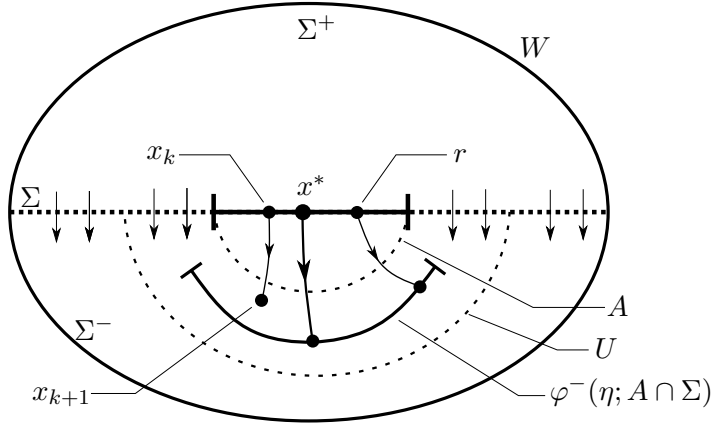

		\begin{center}
			\begin{overpic}[width=8cm]{Fig6.eps} 
				\put(85,61){$W$}
				\put(2,36){$\Sigma$}
				\put(11,14){$\Sigma^-$}
				\put(48,64){$\Sigma^+$}
				\put(48,38){$x^*$}
				\put(23,44){$x_k$}
				\put(4,4){$x_{k+1}$}
				\put(74,43.5){$r$}
				\put(99.5,18){$A$}
				\put(97.5,11){$U$}
				\put(90,4){$\varphi^-(\eta;A\cap\Sigma)$}
			\end{overpic}
		\end{center}
		\caption{Illustration of the proof of Proposition~\ref{P1}.}\label{Fig5}
	\end{figure}
	
	Let $\varphi^-(\tau;r)$ be the solution of $X^-$ with initial condition $\varphi^-(0;r)=r$. Given a neighborhood $U\subset W$ of $x^*$, it follows from the continuous dependence on the initial conditions (see~\cite[Theorem $8$ and $9$]{Andronov}) that there exist a neighborhood $A\subset U$ of $x^*$ and $\eta>0$ such that if $(\tau,r)\in[0,\eta]\times( A\cap\Sigma^-)$, then $\varphi^-(\tau,r)\in U\cap\Sigma^-$.
	
	Restricting $A$ if necessary, we have from the Flow Box Theorem (see~\cite[Theorem~$1.12$]{DumLliArt2006}) that if $(\tau,r)\in(0,\eta]\times (A\cap\Sigma)$, then $\varphi^-(\tau,r)\in (U\cap\Sigma^-)\setminus\Sigma$ (see Figure~\ref{Fig5}).
	
	Since $x_k\to x^*$ and $t_k-t_{k-1}\to0^+$, it follows that for $k\in\mathbb{N}$ big enough we have $x_k\in A\cap\Sigma$ and $t_{k+1}-t_k<\eta$, which implies that $x_{k+1}\in(U\cap\Sigma^-)\setminus\Sigma$, contradicting the fact that $x_{k+1}\in\Sigma$ is a crossing point.
\end{proof}

\begin{proposition}\label{P2}
	Consider $(Z,\Sigma,\varphi)$ and $x\in\mathbb{R}^n\setminus\Sigma$. Let $T>0$ be such that $\{\varphi(t;x):t\in[0,T]\}$ intersects $\Sigma$ only in crossing points and $\varphi(T;x)\not\in\Sigma$. Then $\Phi^T$ is a $\mathcal{C}^\infty$-diffeomorphism in a neighborhood of $x$.
\end{proposition}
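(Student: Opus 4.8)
The plan is to decompose $\Phi^T$ near $x$ as a finite composition of $\mathcal{C}^\infty$-maps: smooth flows of single components, interspersed with smooth transition maps across $\Sigma$. First I would invoke Proposition~\ref{P1}: since $\{\varphi(t;x):t\in[0,T]\}$ meets $\Sigma$ only at crossing points and $\varphi(T;x)\notin\Sigma$, this intersection is finite, occurring at times $0<t_1<\dots<t_m<T$ and points $x_j:=\varphi(t_j;x)\in\Sigma$. Each $x_j$ is a crossing point, so by Definition~\ref{Def1} there is a unique $i_j$ with $x_j\in\Sigma_{i_j}$, $\nabla h_{i_j}(x_j)\neq 0$, and exactly two components $X_j^-,X_j^+$ of $Z$ (the ones active just before and just after $t_j$) are defined near $x_j$ with $X_j^- h_{i_j}(x_j)\,X_j^+ h_{i_j}(x_j)>0$. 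Since each $\Sigma_i$ is closed and $x_j\notin\Sigma_i$ for $i\neq i_j$, shrinking to a small ball around $x_j$ we may assume $\Sigma$ agrees there with $\Sigma_{i_j}$ and, by continuity, that $X_j^\pm h_{i_j}$ keeps a constant common sign. Then choose intermediate times $0=c_0<c_1<\dots<c_m<c_{m+1}=T$ with exactly one crossing time $t_j$ in each interval $(c_{j-1},c_j)$ for $j=1,\dots,m$ and none in $(c_m,T)$; set $y_j:=\varphi(c_j;x)$, so $y_0=x$, $y_{m+1}=\varphi(T;x)$, and each $y_j\notin\Sigma$. By uniqueness of crossing solutions the flow property $\varphi(c_j;\cdot)=\varphi(c_j-c_{j-1};\varphi(c_{j-1};\cdot))$ holds on a neighborhood of $x$, so that $\Phi^T=\Phi^{T-c_m}\circ\Phi^{c_m-c_{m-1}}\circ\cdots\circ\Phi^{c_1-c_0}$ as germs along the reference orbit, with $\Phi^{c_j-c_{j-1}}$ applied near $y_{j-1}$.

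Next I would check each factor is $\mathcal{C}^\infty$. For the last one, $\Phi^{T-c_m}$ near $y_m$: the arc $\{\varphi(t;x):t\in[c_m,T]\}$ stays in a single connected component $A_\ell$ of $\mathbb{R}^n\setminus\Sigma$ and is governed by a single smooth component $X_\ell$; by openness of $A_\ell$ and continuous dependence on initial conditions, for $y$ near $y_m$ the $X_\ell$-trajectory also stays in $A_\ell$ up to time $T-c_m$, so $\Phi^{T-c_m}$ is just the smooth time-$(T-c_m)$ flow of $X_\ell$. For the factor $\Phi^{c_j-c_{j-1}}$ near $y_{j-1}$ with $j\le m$: the reference arc first follows $X_j^-$ on $(c_{j-1},t_j)$, hits $\Sigma_{i_j}$ transversally at $x_j$, then follows $X_j^+$. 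Writing $\psi^s_-,\psi^s_+$ for the smooth flows of $X_j^-,X_j^+$, I would apply the Implicit Function Theorem to $(y,s)\mapsto h_{i_j}(\psi^s_-(y))$ at $(y_{j-1},\,t_j-c_{j-1})$ — where the $s$-derivative equals $X_j^- h_{i_j}(x_j)\neq0$ — to get a $\mathcal{C}^\infty$ hitting-time $\tau(y)$ with $\tau(y_{j-1})=t_j-c_{j-1}$ and $h_{i_j}(\psi^{\tau(y)}_-(y))=0$. A continuity/compactness argument then shows that, for $y$ close to $y_{j-1}$, the trajectory $\psi^s_-(y)$ stays off $\Sigma$ for $s\in[0,\tau(y))$, crosses $\Sigma_{i_j}$ at a crossing point at $s=\tau(y)$, and its $X_j^+$-continuation stays off $\Sigma$ for $s\in\bigl(0,(c_j-c_{j-1})-\tau(y)\bigr]$. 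Hence, on a neighborhood of $y_{j-1}$,
\[
\Phi^{c_j-c_{j-1}}(y)=\psi^{(c_j-c_{j-1})-\tau(y)}_+\!\left(\psi^{\tau(y)}_-(y)\right),
\]
a composition of $\mathcal{C}^\infty$-maps, hence $\mathcal{C}^\infty$. Therefore $\Phi^T$ is $\mathcal{C}^\infty$ on a neighborhood of $x$.

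Finally, to upgrade ``smooth'' to ``diffeomorphism'', I would run the identical argument in backward time: the arc $\{\varphi(t;\varphi(T;x)):t\in[-T,0]\}$ is the reversal of the original one, still meets $\Sigma$ only in crossing points (conditions (i)--(iii) of Definition~\ref{Def1} are invariant under $Z\mapsto -Z$, the product in (iii) being unchanged), and has endpoints off $\Sigma$. Thus $\Phi^{-T}$ is $\mathcal{C}^\infty$ on a neighborhood of $\varphi(T;x)$, and by uniqueness of crossing solutions $\Phi^{-T}\circ\Phi^T=\mathrm{id}$ near $x$ while $\Phi^T\circ\Phi^{-T}=\mathrm{id}$ near $\varphi(T;x)$; hence $\Phi^T$ is a $\mathcal{C}^\infty$-diffeomorphism from a neighborhood of $x$ onto a neighborhood of $\varphi(T;x)$.

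The main obstacle I anticipate is not a single deep step but the careful local analysis at each crossing point $x_j$: turning the passage through $\Sigma$ into a bona fide composition of smooth maps valid for \emph{all} nearby initial data — namely the Implicit Function Theorem for the hitting time together with the continuity arguments guaranteeing that nearby trajectories cross $\Sigma$ exactly once, transversally, on the single sheet $\Sigma_{i_j}$, and that the concatenation defining the global solution genuinely coincides there with the smooth formula above. The bookkeeping of the flow (semigroup) property for concatenated solutions must also be handled, but it follows from uniqueness of crossing trajectories.
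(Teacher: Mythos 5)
Your proposal follows essentially the same route as the paper: invoke Proposition~\ref{P1} for finiteness of crossings, split $[0,T]$ into subintervals each containing at most one crossing time, apply the Implicit Function Theorem to the smooth flow of the incoming component to obtain a $\mathcal{C}^\infty$ hitting time, write the local time map as a composition of smooth one-sided flows reparametrized by that hitting time, and then use time reversal to show $\Phi^{-T}$ is likewise smooth so that $\Phi^{T}$ is a diffeomorphism. The only cosmetic difference is the bookkeeping of the decomposition (the paper first treats a single crossing and then composes, choosing $T_i\in(t_i,t_{i+1})$, whereas you pick your intermediate times $c_j$ directly), but the key lemma, the IFT step, and the inversion argument are identical.
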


\begin{proof}
	Since $x\not\in\Sigma$ and $\varphi(T;x)\not\in\Sigma$, we can extend the orbit of $Z$ through $x$ to a neighborhood $I_x\subset\mathbb{R}$ of $[0,T]$ such that $\{\varphi(t;x):t\in I_x\}$ intersects $\Sigma$ only in crossing points. In particular, $\Phi^T$ is well defined in a neighborhood of $x$. 
	
	From Proposition~\ref{P1} there are at most finitely many $t_1,\dots,t_k\in(0,T)$ such that $\varphi(t_i;q)\in\Sigma$. We assume for the moment that $k=1$. In this case, we can also assume that there exists a $\mathcal{C}^\infty$-function $h\colon\mathbb{R}^n\to\mathbb{R}$ such that $\Sigma=h^{-1}(\{0\})$ and $Z=(X^+,X^-)$, with $X^\pm$ defined in a neighborhood of the region
		\[\Sigma^\pm=\{x\in\mathbb{R}^n\colon \pm h(x)\geqslant0\},\]
	with the corresponding solution denoted by $\varphi^\pm(\tau;r)$. Suppose for definiteness that $x\in\Sigma^+$ and $\varphi(T;x)\in\Sigma^-$ (see Figure~\ref{Fig1}). 
	\begin{figure}[ht]
		\begin{center}
			\begin{overpic}[width=5cm]{Fig1.eps} 
				\put(-2,51){$\Sigma$}
				\put(20,77){$A$}
				\put(11,85){$x$}
				\put(9,93){$r$}
				\put(68,16){$\Phi^T(r)$}
				\put(8,19.5){$\Phi^{T}(x)$}
				\put(87,39){$\varphi(\tau(r);r)$}
				\put(21,38){$\varphi(t_1;x)$}
				\put(52,0){$\Phi^T(A)$}
			\end{overpic}
		\end{center}
		\caption{Illustration of the proof of Proposition~\ref{P2}.}\label{Fig1}
	\end{figure}	
	 Since $X^+$ is defined in a neighborhood of $\Sigma^+$, there exists $\eta>0$ sufficiently small such that $\varphi^+(\tau;r)$ is well defined for $\tau\in I:=(-\eta,t_1+\eta)$. Hence, it follows from the continuous dependence on the initial conditions (see~\cite[Theorem $8$ and $9$]{Andronov}) that there is a neighborhood $A\subset\mathbb{R}^n\setminus\Sigma$ of $x$ such that $\varphi^+(\tau;r)$ is well defined for every $(\tau;r)\in I\times A$. Let $F\colon I\times A\to\mathbb{R}$ be given by
		\[F(t,r)=h(\varphi^+(t;r)).\]
	Notice that $F$ is well defined and of class $\mathcal{C}^\infty$ in $I\times A$. Moreover, 
	\begin{equation}\label{1}
		\frac{\partial F}{\partial t}(t,r)=\left<\nabla h(\varphi^+(t;r)),X^+(\varphi^+(t;r))\right>,
	\end{equation}
	where we have used that $\partial\varphi^+/\partial t=X^+\circ\varphi^+$. Thus, Equation \eqref{1} in addition with conditions $(ii)$ and $(iii)$ from Definition~\ref{Def1} lead us to 
		\[F(t_1,x)=0\quad \mbox{and} \quad \frac{\partial F}{\partial t}(t_1,x)\neq0.\]
	Therefore, it follows from the Implicit Function Theorem that there is a $\mathcal{C}^\infty$-function $\tau\colon A\to\mathbb{R}$, with $\tau(q)=t_1$, such that $F(\tau(r),r)\equiv0$. Observe that $\tau(r)$ is exactly the time necessary for a point $r\in A$ to reach $\Sigma$. Restricting $A$ if necessary, we can assume that there is $\tau_0>0$ such that $\tau(r)\leqslant\tau_0<T$ for every $r\in A$. This implies that, for $r\in A$, the time-$T$-map $\Phi^T$ can be written as
		\[\Phi^T(r)=\varphi^-(T-\tau(r);\varphi^+(\tau(r);r)).\]
	In particular, $\Phi^T$ is $\mathcal{C}^\infty$ when restricted to $A$. By reversing the time, we observe that the endpoint $\Phi^{T}(x)$ satisfies the same hypothesis as $x$ and thus we can also prove that $\Phi^{-T}$ is $\mathcal{C}^\infty$ in a neighborhood of $\Phi^{T}(x)$. This concludes the proof, since $\Phi^{T}$ and $\Phi^{-T}$ are inverses one of each other.
	
	In order to prove the general case (i.e., $k>1$), let us set $T_0=0$ and $T_k=T$, and for each $i\in\{1,\dots,k-1\}$, choose $T_i\in(t_i,t_{i+1})$. Also, let $x_i=\varphi(T_i;x)$ for $i\in\{0,\dots,k-1\}$. Analogously to the previous reasoning, the map $\Phi^{T_{i+1}-T_i}$ is a $\mathcal{C}^\infty$-difeomorphism in a neighborhood of $x_i$, ${i\in\{0,\dots,k-1\}}$. The proof now follows from the fact that $\Phi^T=\Phi^{T_k-T_{k-1}}\circ\dots\circ\Phi^{T_1-T_0}$.
\end{proof}

The following example highlights the importance of assuming that the starting and ending points of an arc of orbit do not belong to the switching set, which is essential to ensure that the time-$T$-map is a local diffeomorphism at such points, as emphasized in the previous proposition.

\begin{example}
Let us consider the planar piecewise smooth vector field $Z=(X^+,X^-)$, where  $X^+(x,y)=(0,1)$ and $X^+(x,y)=(1,1)$ are defined in an open neighborhood of the region
\[
\Sigma^\pm\{(x,y)\in\mathbb{R}^2\colon \pm y\geqslant0\},
\]
meaning that
\[
\Sigma=\{(x,y)\in\mathbb{R}^2\colon y=0\},
\]
represents the switching set of $Z$.

We notice that the solutions of $X^+$ and $X^-$ are given by
	\begin{equation}\label{2}
		\varphi^+(t;x,y)=(x,t+y)\quad \mbox{and} \quad  \varphi^-(t;x,y)=(t+x,t+y),
	\end{equation}
	respectively. From \eqref{2}, it is not hard to see that the solution $\varphi(t;x,y)$ of $Z$ is given by
	\begin{equation}\label{3}
		\varphi(t;x,y)=\left\{\begin{array}{ll}
			(t+x,t+y) & \text{if } y\leqslant0 \text{ and } t\leqslant -y, \vspace{0.2cm} \\
			(x-y,t+y) & \text{if } y\leqslant0 \text{ and } t\geqslant -y, \vspace{0.2cm} \\
			(x,t+y) & \text{if } y>0 \text{ and } t\geqslant -y, \vspace{0.2cm} \\
			(t+x+y,t+y) & \text{if } y>0 \text{ and } t\leqslant -y.
		\end{array}\right.
	\end{equation}
	Given $T>0$, it follows from \eqref{3} that
		\[\lim\limits_{y\to0^+}\frac{\Phi^T(0,y)-\Phi^T(0,0)}{y}=(0,1), \quad \lim\limits_{y\to0^-}\frac{\Phi^T(0,y)-\Phi^T(0,0)}{y}=(-1,1).\]
	In particular, it follows that $\partial\Phi^T/\partial y$ is not well defined at $(0,0)$ and thus $\Phi^T$ is not differentiable at the origin, for every $T>0$.
\end{example}

Although the time-$T$-map of $Z$ may not be everywhere smooth, we can prove that it is continuous.

\begin{proposition}\label{P3}
	Consider $(Z,\Sigma,\varphi)$ and $x\in\mathbb{R}^n$. Let $T>0$ be such that $\{\varphi(t;x):t\in[0,T]\}$ intersects $\Sigma$ only in crossing points. Then $\Phi^T$ is a homeomorphism in a neighborhood of $x$.
\end{proposition}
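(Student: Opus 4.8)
The plan is to handle the two essentially different situations separately and then glue them together, exactly as in the proof of Proposition \ref{P2}, the only new feature being that now the arc of orbit is allowed to start or end on $\Sigma$. First, if $x \notin \Sigma$ and $\varphi(T;x) \notin \Sigma$, then Proposition \ref{P2} already gives that $\Phi^T$ is a $\mathcal{C}^\infty$-diffeomorphism near $x$, hence in particular a homeomorphism; so nothing remains to do in that case. The content of the statement is therefore concentrated at the endpoints, and by reversing time it suffices to treat the case $x \in \Sigma$ (the case $\varphi(T;x) \in \Sigma$ follows by applying the result to $\Phi^{-T}$ at the endpoint and inverting). Since $x \in \Sigma$ is a crossing point, after a local change of coordinates we may assume $\Sigma = h^{-1}(0)$ locally, $Z = (X^+, X^-)$ with $X^\pm$ defined near $\Sigma^\pm = \{\pm h \geqslant 0\}$, and — say — that the trajectory through $x$ enters $\Sigma^-$ in forward time, so that $x$ is the image under $X^+$-flow of a point $r_0 \in \Sigma^+ \setminus \Sigma$, i.e.\ $\varphi(-\delta; x) \notin \Sigma$ for small $\delta > 0$.

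The key idea is to absorb the singular behavior at the endpoint into a single, genuinely continuous (but possibly non-smooth) ``hitting map'' and compose it with smooth flows. Concretely, I would proceed as follows. Pick a small $\delta > 0$ with $y_0 := \varphi(-\delta; x) \notin \Sigma$ and $y_1 := \varphi(T; x)$; by shrinking $\delta$ we may assume $\{\varphi(t; y_0) : t \in [0, \delta + T]\}$ still meets $\Sigma$ only in crossing points and, if $y_1 \notin \Sigma$, that $y_1 \notin \Sigma$ persists. Write $\Phi^T = \Phi^{\delta + T - s_0}_{\text{(rest)}} \circ (\text{hitting}) \circ \Phi^{\delta}_{X^+\text{-flow run backwards from }x}$ — more carefully: $\Phi^T(z) = \Psi\big(\Phi^{\delta}(z)\big)$ where $\Phi^\delta$ is the (smooth, by Proposition \ref{P2} applied on $[-\delta,0]$ away from $\Sigma$ — or simply by the classical flow of $X^+$ since that arc avoids $\Sigma$) diffeomorphism carrying a neighborhood of $y_0$ to a neighborhood of $x$, and $\Psi$ is the time-$(T+\delta)$ map of $Z$ restricted to a neighborhood of $y_0$. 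Thus it is enough to prove continuity of $\Psi$ near the point $y_0 \notin \Sigma$ whose forward orbit first hits $\Sigma$ transversally at $x$. Near $y_0$, by the Implicit Function Theorem (as in Proposition \ref{P2}, using $\partial F/\partial t \neq 0$ at the crossing, which holds by Definition \ref{Def1}(iii)) there is a $\mathcal{C}^\infty$ first-hitting-time function $\tau(\cdot)$ with $\tau(y_0) = \delta$, after which the orbit continues with $X^-$; and this $X^-$-arc from $\varphi^+(\tau(z); z) \in \Sigma$ has, by Proposition \ref{P1}, only finitely many further crossings, each transversal, so iterating the Implicit Function Theorem argument finitely many times — interleaving the two smooth solution maps $\varphi^\pm$ with the smooth hitting times — expresses $\Psi$ on a neighborhood of $y_0$ as a finite composition of $\mathcal{C}^\infty$ maps. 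Hence $\Psi$, and therefore $\Phi^T$, is $\mathcal{C}^\infty$ — in particular continuous — in a neighborhood of $x$.

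Wait — that argument, if it went through verbatim, would prove $\Phi^T$ is smooth near $x$, contradicting the Example. The subtlety, and the step I expect to be the main obstacle, is precisely the \emph{first} transition, at the point $x \in \Sigma$ itself: there the two one-sided descriptions of $\varphi$ do not patch together smoothly (the Example shows the derivative jumps), because points on the two sides of $\Sigma$ near $x$ reach $\Sigma$ in qualitatively different ways — some are already across, some must still flow to it. So the correct bookkeeping is: split the domain neighborhood $A$ of $x$ into $A^+ = A \cap \Sigma^+$, $A^- = A \cap \Sigma^-$, and $A \cap \Sigma$. On $A^+$, run $X^+$ until the (smooth, IFT-provided) first hitting time, then $X^-$; on $A^-$ the orbit is immediately governed by $X^-$ (no $X^+$ arc at all), so $\Phi^T|_{A^-}$ is given directly by interleaving $\varphi^-$ with later crossings; on $A \cap \Sigma$ it is the common boundary value. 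One then checks (i) each of $\Phi^T|_{\overline{A^+}}$ and $\Phi^T|_{\overline{A^-}}$ extends continuously to the closure and (ii) the two extensions \emph{agree} on $A \cap \Sigma$ — both equal $\varphi^-$ run for the appropriate time from the $\Sigma$-point, since on $\Sigma$ the first hitting time is $0$. Continuity of $\Phi^T$ on $A$ then follows from the pasting lemma for the closed cover $\{\overline{A^+}, \overline{A^-}\}$. Finally, to upgrade ``continuous'' to ``homeomorphism'' I would invoke the same construction for $\Phi^{-T}$: the endpoint $\varphi(T;x)$ — whether or not it lies on $\Sigma$, and if it does, since it is again a crossing point — satisfies the hypotheses of the proposition with $Z$ replaced by $-Z$ (equivalently, time reversed), so $\Phi^{-T}$ is continuous near $\varphi(T;x)$; since $\Phi^T$ and $\Phi^{-T}$ are mutually inverse wherever both are defined, $\Phi^T$ is a homeomorphism onto its image near $x$. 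The main work, then, is the careful case analysis at the initial crossing and verifying the matching of one-sided limits on $\Sigma$; everything downstream is a finite composition handled exactly as in Proposition \ref{P2}.
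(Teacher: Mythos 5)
Your final plan (after the self-correction marked by ``Wait'') is precisely the paper's proof: express $\Phi^T$ on a neighborhood $A$ of $x\in\Sigma$ piecewise via the $\mathcal{C}^\infty$ first-hitting-time function $\tau$ from the Implicit Function Theorem, note that on $A\cap\Sigma^+$ the map is $\varphi^-(T-\tau(r);\varphi^+(\tau(r);r))$ while on $A\cap\Sigma^-$ it is $\varphi^-(T;r)$, check the two expressions agree on $A\cap\Sigma$ (where $\tau\equiv 0$), conclude continuity by the Pasting Lemma, and finish by running the same argument for $\Phi^{-T}$ to upgrade to a homeomorphism. The detour in your second paragraph deserves a word: the factorization $\Phi^T(z)=\Psi\bigl(\Phi^{\delta}(z)\bigr)$ is not well-posed as written (to evaluate $\Phi^T$ near $x$ you would need $\Psi\circ\Phi^{-\delta}$), and once fixed it is circular rather than merely wrong --- $\Phi^{-\delta}$ near $x\in\Sigma$ has exactly the same endpoint-on-$\Sigma$ pathology as $\Phi^T$, so nothing is gained; your observation that the argument ``would prove $\Phi^T$ smooth, contradicting the Example'' is the right alarm bell, though the precise culprit is the inverse leg $\Phi^{-\delta}$, not $\Psi$. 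Two small omissions that do not affect the substance: you do not explicitly address the sub-case $x\in\Sigma$ \emph{and} $\varphi(T;x)\in\Sigma$, which needs a split at an intermediate time off $\Sigma$ (the paper dispatches it in one line as ``follows similarly''); and the passage from one interior crossing to finitely many, handled by composition as in Proposition~\ref{P2}, is only gestured at.
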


\begin{proof}
	Observe that even if $x\in\Sigma$ or $\varphi(T;x)\in\Sigma$, we can extend the orbit of $Z$ through $x$ to a neighborhood $I_x\subset\mathbb{R}$ of $[0,T]$ such that $\{\varphi(t;x):t\in I_x\}$ intersects $\Sigma$ only in crossing points. In particular, $\Phi^T$ is well defined in a neighborhood of $x$.
	
	If $\{\varphi(t;x):t\in[0,T]\}$ never intersects $\Sigma$ then there is nothing to prove. On the other hand, suppose that 
		\[\{\varphi(t;x):t\in[0,T]\}\cap\Sigma\neq\emptyset.\] 
	If $x\not\in\Sigma$ and $\varphi(T;x)\not\in\Sigma$, then the result follows from Propositions~\ref{P1} and~\ref{P2}. Hence we need to work the cases in which $x\in\Sigma$ or $\varphi(T;x)\in\Sigma$.
	
	Firstly we suppose that $x\not\in\Sigma$ and $\varphi(T;x)\in\Sigma$. For simplicity, we can also assume that $\varphi(t;x)\not\in\Sigma$ for every $t\in(0,T)$. Similarly to the proof of Proposition~\ref{P1}, there exists a neighborhood $A\subset\mathbb{R}^n\setminus\Sigma$ of $x$ and a $\mathcal{C}^\infty$-function $\tau\colon A\to\mathbb{R}$, with $\tau(x)=T$, such that $\varphi(\tau(r),r)\in\Sigma$ for every $r\in A$. Hence, $\Phi^T|_A$ can be written as
		\[\Phi^T|_A(r)=\left\{\begin{array}{ll}
			\varphi^-(T-\tau(r),\varphi^+(\tau(r),r)) & \text{if } \tau(r)\leqslant T, \vspace{0.2cm} \\
			\varphi^+(T,r) & \text{if } \tau(r)\geqslant T,
		\end{array}\right.\]
	where $\varphi^\pm(\tau;r)$ is the solution of $X^\pm$ (see Figure~\ref{Fig2}$(a)$).
	\begin{figure}[ht]
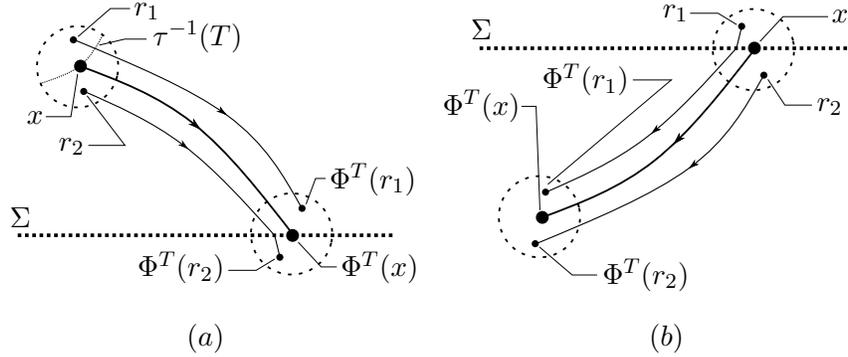

		\begin{center}
			\begin{minipage}{6cm}
				\begin{center} 
					\begin{overpic}[width=5cm]{Fig2.eps} 
						\put(-2,13){$\Sigma$}
						\put(31,69.5){$r_1$}
						\put(36,61.5){$\tau^{-1}(T)$}
						\put(2.5,40.5){$x$}
						\put(11,33.5){$r_2$}
						\put(83,23.5){$\Phi^T(r_1)$}
						\put(86,0){$\Phi^T(x)$}
						\put(32,0){$\Phi^T(r_2)$}
					\end{overpic}
				
					$\;$
					
					$(a)$
				\end{center}
			\end{minipage}
			\begin{minipage}{6cm}
				\begin{center} 
					\begin{overpic}[width=5cm]{Fig3.eps} 
						\put(-2,63){$\Sigma$}
						\put(49,69){$r_1$}
						\put(94,67.5){$x$}
						\put(90,44){$r_2$}
						\put(17,50){$\Phi^T(r_1)$}
						\put(33,-2){$\Phi^T(r_2)$}
						\put(-9.5,43){$\Phi^T(x)$}
					\end{overpic}
				
					$\;$
					
					$(b)$
				\end{center}
			\end{minipage}
		\end{center}
		\caption{Illustration of the proof of Proposition~\ref{P3}. In both figures we have $\tau(r_1)>\tau(x)$ and $\tau(r_2)<\tau(x)$.}\label{Fig2}
	\end{figure}
	Therefore it follows from the Pasting Lemma (see~\cite[Theorem $18.3$]{Munkres}) that $\Phi^T$ is continuous in $A$. The case in which $x\in\Sigma$ and $\varphi(T;x)\not\in\Sigma$ follows similarly, with the difference that $\tau(x)=0$, leading to
		\[\Phi^T|_A(r)=\left\{\begin{array}{ll}
			\varphi^-(T-\tau(r),\varphi^+(\tau(r),r)) & \text{if } \tau(r)\geqslant 0, \vspace{0.2cm} \\
			\varphi^-(T,r) & \text{if } \tau(r)\leqslant 0,
		\end{array}\right.\]
	where $A$ is restricted so that $|\tau(r)|\leqslant\tau_0<T$ for every $r\in A$ (see Figure~\ref{Fig2}$(b)$).	The case where $x\in\Sigma$ and $\varphi(T;x)\in\Sigma$ follows similarly. 
	
	Observe that $\Phi^{-T}$ satisfies similar hypothesis and thus can also be proved to be continuous in a neighborhood of $\varphi(T;x)$. The result now follows from the fact that $\Phi^{-T}$ is the inverse of $\Phi^T$.
	
	The general case in which there are finitely many (due to Proposition~\ref{P1}) $t_1,\dots,t_k\in(0,T)$ such that $\varphi(t_k;x)\in\Sigma$ is analogous to what was done in the end of the proof of Proposition~\ref{P2}.
\end{proof}

\subsection{Poincar\'e map}\label{Sec3.2}

We now return to the original context of Section~\ref{Sec2}, concerning the homoclinic connection and its surrounding neighborhood. Given a set $\mathcal{W}\subset\mathbb{R}^n\times\mathbb{S}^1$ and $t_0\in\mathbb{S}^1$, we recall that
	\[W^{t_0}=\{(x,t)\in\mathcal{W}\colon t=t_0\}.\]
Let $\Phi_\varepsilon^T$ be the time $T$-map of $\mathcal{Z}_\varepsilon$, that is, $\Phi_\varepsilon^T(x,t)=\Phi_\varepsilon(T;x,t)$, where $\Phi_\varepsilon$ is the global solution \eqref{23} of $\mathcal{Z}_\varepsilon$. It follows from Propositions~\ref{P1}, \ref{P2} and~\ref{P3} that, for $\varepsilon>0$ small enough, there are neighborhoods $\mathcal{A}_\varepsilon$, $\mathcal{B}_\varepsilon\subset\mathbb{R}^n\times\mathbb{S}^1$ of $\Gamma\times\mathbb{S}^1$ (where we recall that $\Gamma$ is the homoclinic connection of $Z_0$) such that $\Phi_\varepsilon^{T}\colon\mathcal{A}_\varepsilon\to\mathcal{B}_\varepsilon$ is a well defined homeomorhism.

Hence, for each $\varepsilon>0$, we can take a small neighborhood $U_\varepsilon^0\subset A_\varepsilon^0$ of $\Gamma\times\{t=0\}$ such that if we define

\[
	\mathcal{U}_\varepsilon:=\bigcup_{t\in[0,T]}\Phi_\varepsilon^{t}(U_\varepsilon^{0}), \quad \mathcal{V}_\varepsilon:=\bigcup_{t\in[0,T]}\Phi_\varepsilon^{T+t}(U_\varepsilon^{0}),
\]
 then $\mathcal{U}_\varepsilon\subset\mathcal{A}_\varepsilon$ and $\mathcal{V}_\varepsilon\subset\mathcal{B}_\varepsilon$. In particular, for $\varepsilon>0$ small enough, we have
\begin{equation}\label{13}
	U_\varepsilon^{t_2}=\Phi_\varepsilon^{t_2-t_1}(U_\varepsilon^{t_1}), \quad V_\varepsilon^{t_1}=\Phi_\varepsilon^{t_1-t_2}(V_\varepsilon^{t_2}),
\end{equation}
for any two $t_1$, $t_2\in\mathbb{S}^1$.

\begin{proposition}\label{P4}
		Let $P_\varepsilon^t\colon U^t_\varepsilon\to V^t_\varepsilon $ be the time-$T$-map defined in~\eqref{Peps}. For sufficiently small $\varepsilon>0$, the following conditions hold:
	\begin{itemize}
		\item[(a)] $P_\varepsilon^t$ is a homeomorphism for every $t\in\mathbb{S}^1 $;
		\item[(b)] $P_\varepsilon^t$ is a diffeomorphism in $U_\varepsilon^t\setminus\bigl(\Omega^t\cup (P_\varepsilon^t)^{-1}(\Omega^t)\bigr)$.
	\end{itemize}
\end{proposition}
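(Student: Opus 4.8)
The plan is to deduce everything from the local structure results of Subsection~\ref{Sec3.1} by viewing $P_\varepsilon^t$ as a slice restriction of the ambient time-$T$-map of $\mathcal{Z}_\varepsilon$. First I would record that, by~\eqref{23}, $\Phi_\varepsilon^T(x,t)=\Phi_\varepsilon(T;x,t)=(\varphi_\varepsilon(T;x,t),t+T)$, and that since $\mathbb{S}^1=\mathbb{R}/T\mathbb{Z}$ one has $t+T=t$ in $\mathbb{S}^1$; hence $\Phi_\varepsilon^T$ carries the slice $\{t'=t\}$ into itself and, under the identification $(x,t)\approx x$, its restriction to $U_\varepsilon^t$ is exactly $P_\varepsilon^t$. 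I would also recall, from the construction preceding the statement (which itself invokes Propositions~\ref{P1}, \ref{P2} and~\ref{P3}), that for $\varepsilon>0$ small there are neighborhoods $\mathcal{A}_\varepsilon,\mathcal{B}_\varepsilon$ of $\Gamma\times\mathbb{S}^1$ with $U_\varepsilon^t\subset\mathcal{A}_\varepsilon$ and $\Phi_\varepsilon^T(U_\varepsilon^t)=V_\varepsilon^t\subset\mathcal{B}_\varepsilon$, such that $\Phi_\varepsilon^T\colon\mathcal{A}_\varepsilon\to\mathcal{B}_\varepsilon$ is a homeomorphism, and that for every $(x,t)\in\mathcal{A}_\varepsilon$ the arc $\{\Phi_\varepsilon(\tau;x,t):\tau\in[-T,T]\}$ is well defined and meets $\Omega$ only at crossing points (finitely many, by Proposition~\ref{P1}).

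For part (a), since $\Phi_\varepsilon^T\colon\mathcal{A}_\varepsilon\to\mathcal{B}_\varepsilon$ is a homeomorphism and $U_\varepsilon^t\subset\mathcal{A}_\varepsilon$ with $\Phi_\varepsilon^T(U_\varepsilon^t)\subset\mathbb{R}^n\times\{t\}$, the restriction $P_\varepsilon^t=\Phi_\varepsilon^T|_{U_\varepsilon^t}$ is a homeomorphism onto $V_\varepsilon^t$; its inverse is the restriction of $\Phi_\varepsilon^{-T}$ to $V_\varepsilon^t$, which is well defined and continuous there because each $y\in V_\varepsilon^t$ lies on an arc of the type above and crossing orbits are uniquely determined (the only intersections with $\Omega$ are crossing points, where the local trajectory is unique), so Proposition~\ref{P3} applies to the reversed-time flow. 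This gives (a) for every $t\in\mathbb{S}^1$.

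For part (b), I would fix $(x,t)\in U_\varepsilon^t\setminus\bigl(\Omega^t\cup(P_\varepsilon^t)^{-1}(\Omega^t)\bigr)$, so that $(x,t)\notin\Omega$ and $\Phi_\varepsilon^T(x,t)=P_\varepsilon^t(x)\notin\Omega$, while the arc $\{\Phi_\varepsilon(\tau;x,t):\tau\in[0,T]\}$ meets $\Omega$ only in crossing points. Then Proposition~\ref{P2}, applied to $\mathcal{Z}_\varepsilon$, shows $\Phi_\varepsilon^T$ is a $\mathcal{C}^\infty$-diffeomorphism on a neighborhood of $(x,t)$ in $\mathbb{R}^n\times\mathbb{S}^1$; since this diffeomorphism maps the slice $\{t'=t\}$ to the slice $\{t'=t\}$ (again because $t+T=t$ in $\mathbb{S}^1$), restricting it to the slice exhibits $P_\varepsilon^t$ as a local $\mathcal{C}^\infty$-diffeomorphism near $x$. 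Combining this with the global injectivity of $P_\varepsilon^t$ from part (a) yields that $P_\varepsilon^t$ is a $\mathcal{C}^\infty$-diffeomorphism from the open set $U_\varepsilon^t\setminus\bigl(\Omega^t\cup(P_\varepsilon^t)^{-1}(\Omega^t)\bigr)$ onto its image.

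The bulk of the work is bookkeeping (continuity, injectivity, staying inside domains where only crossing points occur); the one point to watch is that Propositions~\ref{P1}--\ref{P3}, stated for autonomous PSVF on $\mathbb{R}^n$, are legitimately applied to $\mathcal{Z}_\varepsilon$ on $\mathbb{R}^n\times\mathbb{S}^1$ because those statements are local, and that the identification $t+T=t$ in $\mathbb{S}^1$ makes $\Phi_\varepsilon^T$ slice-preserving, so no regularity is lost when passing from the ambient flow to $P_\varepsilon^t$.
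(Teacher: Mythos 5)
Your proposal is correct and follows essentially the same route as the paper: statement (a) is deduced from Propositions~\ref{P1} and~\ref{P3} applied to $\mathcal{Z}_\varepsilon$, and statement (b) from Propositions~\ref{P1} and~\ref{P2}, in each case transferring the ambient result to the slice $\{t'=t\}$ via the product structure $\Phi_\varepsilon(T;x,t)=(\varphi_\varepsilon(T;x,t),t+T)$ with $t+T=t$ in $\mathbb{S}^1$. Your version is more explicit than the paper's about the slice-preserving step and about combining local smoothness with the global injectivity from (a), but the substance is the same.
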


\begin{proof}
	Statement (a) follows directly from Propositions~\ref{P1} and \ref{P3}. In order to prove statement (b), we notice that if  $x\in U^t_\varepsilon\setminus\bigl(\Omega^t\cup (P_\varepsilon^t)^{-1}(\Omega^t)\bigr)$, then $x\notin \bigl(\Omega^t\cup (P_\varepsilon^t)^{-1}(\Omega^t)\bigr)$. Therefore, $x\not\in\Omega^t$ and $P_\varepsilon^t(x)\not\in\Omega^t$, and consequently, taking into account Propositions~\ref{P1} and~\ref{P2}, the map $P_\varepsilon^t$ is a smooth diffeomorphism in a neighborhood of $x$. In particular, we conclude that $P_\varepsilon^t$ is a smooth diffeomorhism in $U_\varepsilon^t\setminus\bigl(\Omega^t\cup (P_\varepsilon^t)^{-1}(\Omega^t)\bigr)$.
\end{proof}

We notice from \eqref{13} that the diagram
	\[
	\begin{tikzcd}
		U^{t_1}_\varepsilon \arrow{r}{P_\varepsilon^{t_1}} \arrow{d}[swap]{\Phi_\varepsilon^{t_2-t_1}} & V^{t_1}_\varepsilon 	\arrow{d}{\Phi_\varepsilon^{t_2-t_1}} \\
		U^{t_2}_\varepsilon \arrow{r}[swap]{P_\varepsilon^{t_2}} & V^{t_2}_\varepsilon
	\end{tikzcd}
	\]
	is commutative. This means that for each $\varepsilon>0$ small, given any two $t_1,t_2\in\mathbb{S}^1$, the Poincaré maps $P_{\varepsilon}^{t_1}$ and $P_{\varepsilon}^{t_2}$ are topologically conjugated. Besides that, from Proposition~\ref{P1} and~\ref{P2} we have (similarly to the proof of Proposition~\ref{P4}) that $\Phi_{\varepsilon}^{t_2-t_1}$ is a diffeomorphism in $U_{\varepsilon}^{t_1}\setminus\bigl(\Omega^{t_1}\cup\Phi_\varepsilon^{t_1-t_2}(\Omega^{t_2})\bigr)$. This leads us to the following proposition.
	
\begin{proposition}\label{P5}
	Let $\varepsilon>0$ small enough. Given any two $t_1$, $t_2\in\mathbb{S}^1$, the maps $P_\varepsilon^{t_1}$ and $P_\varepsilon^{t_2}$ are topologically conjugated, with the conjugation being also a piecewise smooth diffeomorphism.
\end{proposition}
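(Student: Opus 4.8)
The conjugating map will be $\Phi_\varepsilon^{t_2-t_1}$, the time-$(t_2-t_1)$ map of $\mathcal{Z}_\varepsilon$ that already appears in the commutative diagram preceding the statement. Since the commutativity $\Phi_\varepsilon^{t_2-t_1}\circ P_\varepsilon^{t_1}=P_\varepsilon^{t_2}\circ\Phi_\varepsilon^{t_2-t_1}$ has been recorded as a consequence of~\eqref{13}, it remains only to verify that this map is (i) a homeomorphism of $U_\varepsilon^{t_1}$ onto $U_\varepsilon^{t_2}$ carrying $V_\varepsilon^{t_1}$ onto $V_\varepsilon^{t_2}$, and (ii) a piecewise smooth diffeomorphism in the sense fixed after~\eqref{20}. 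At the outset I would make explicit the one point of hygiene: $t_2-t_1$ is to be read as a representative $\tau\in(-T,T)$ of the corresponding class in $\mathbb{S}^1=\mathbb{R}/T\mathbb{Z}$, so that $\Phi_\varepsilon^{\tau}$ is an honest time-$\tau$ map; and, after possibly interchanging $t_1$ and $t_2$, I would take $\tau\in[0,T)$. Recall also that, by the construction of $\mathcal{U}_\varepsilon=\bigcup_{t\in[0,T]}\Phi_\varepsilon^t(U_\varepsilon^0)$ together with Remark~\ref{R1}, the parameter $\varepsilon>0$ is already chosen small enough that $\Phi_\varepsilon^{\tau}$ is defined on all of $U_\varepsilon^{t_1}\subset\mathcal{U}_\varepsilon$ and that along every arc $\{\Phi_\varepsilon(s;x,t_1):s\in[-T,T]\}$, $x\in U_\varepsilon^{t_1}$, the switching set $\Omega$ is met only in crossing points; thus the hypotheses of Propositions~\ref{P1}--\ref{P3} hold at each such $x$.

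For (i), Proposition~\ref{P3} shows that $\Phi_\varepsilon^{\tau}$ is a homeomorphism in a neighborhood of each point of $U_\varepsilon^{t_1}$, and likewise $\Phi_\varepsilon^{-\tau}$ is a homeomorphism near each point of $U_\varepsilon^{t_2}$. By~\eqref{13} the map $\Phi_\varepsilon^{\tau}\colon U_\varepsilon^{t_1}\to U_\varepsilon^{t_2}$ is a bijection with inverse $\Phi_\varepsilon^{-\tau}$, so the local continuity of both maps upgrades to global continuity of a mutually inverse pair; hence $\Phi_\varepsilon^{\tau}$ is a homeomorphism of $U_\varepsilon^{t_1}$ onto $U_\varepsilon^{t_2}$. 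The relation $V_\varepsilon^{t_1}=\Phi_\varepsilon^{t_1-t_2}(V_\varepsilon^{t_2})$ from~\eqref{13} gives the statement about the images.

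For (ii), I would repeat the argument used in the proof of Proposition~\ref{P4}. If $x\in U_\varepsilon^{t_1}\setminus\bigl(\Omega^{t_1}\cup\Phi_\varepsilon^{t_1-t_2}(\Omega^{t_2})\bigr)$, then $x\notin\Omega^{t_1}$ and $\Phi_\varepsilon^{\tau}(x)\notin\Omega^{t_2}$, so the arc of orbit of $\mathcal{Z}_\varepsilon$ joining $x$ to $\Phi_\varepsilon^{\tau}(x)$ has both endpoints off $\Omega$ and, by Proposition~\ref{P1}, meets $\Omega$ only finitely many times; Proposition~\ref{P2} then yields that $\Phi_\varepsilon^{\tau}$ is a $\mathcal{C}^\infty$-diffeomorphism in a neighborhood of $x$. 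Since $(\Phi_\varepsilon^{\tau})^{-1}=\Phi_\varepsilon^{t_1-t_2}$, the set of points at which $\Phi_\varepsilon^{\tau}$ might fail to be a diffeomorphism is contained in $U_\varepsilon^{t_1}\cap\bigl(\Omega^{t_1}\cup(\Phi_\varepsilon^{\tau})^{-1}(\Omega^{t_2})\bigr)$, which is exactly the type of exceptional set that the definition after~\eqref{20} allows. Therefore $\Phi_\varepsilon^{\tau}$ is a piecewise smooth diffeomorphism, and combined with (i) and the commutative diagram this proves the proposition.

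None of these steps is computationally involved: the proof is essentially a bookkeeping assembly of Propositions~\ref{P1}--\ref{P3} with the flow identities~\eqref{13}, much of which was already carried out just before the statement. The one place where care is genuinely needed---and which I would be careful to spell out---is the passage from the \emph{local} regularity statements of Propositions~\ref{P2} and~\ref{P3} to \emph{global} ones on $U_\varepsilon^{t_1}$; this rests on $\Phi_\varepsilon^{\tau}$ and $\Phi_\varepsilon^{-\tau}$ being mutually inverse bijections between $U_\varepsilon^{t_1}$ and $U_\varepsilon^{t_2}$, together with the uniform validity of the crossing-point hypothesis along the relevant arcs, which is guaranteed by the construction of $\mathcal{U}_\varepsilon$ and Remark~\ref{R1}.
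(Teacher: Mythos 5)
Your proposal is correct and follows essentially the same route as the paper, which establishes the conjugacy via the commutative diagram and equation~\eqref{13}, and then invokes Propositions~\ref{P1}--\ref{P3} (mirroring the proof of Proposition~\ref{P4}) to see that $\Phi_\varepsilon^{t_2-t_1}$ is a homeomorphism on $U_\varepsilon^{t_1}$ and a diffeomorphism off $\Omega^{t_1}\cup\Phi_\varepsilon^{t_1-t_2}(\Omega^{t_2})$. The extra care you take with the choice of representative $\tau\in[0,T)$ and with upgrading local to global regularity via the inverse map is sound but only makes explicit what the paper leaves implicit.
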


In light of Proposition~\ref{P5}, any two Poincaré maps are conjugated. Therefore, for simplicity, in the remaining results of this section we will consider $P_\varepsilon:=P_\varepsilon^0$ and $p_\varepsilon:=p_\varepsilon^0$. In addition, when referring to the stable and unstable sets $W^{s,u}(\gamma_\varepsilon)\cap\mathcal{U}_\varepsilon$ and $W^{s,u}(p_\varepsilon)\cap U_\varepsilon^0$ in the next propositions, we will only write $W^{s,u}(\gamma_\varepsilon)$ and $W^{s,u}(p_\varepsilon)$ for simplicity. 

\begin{proposition}\label{P6}
	Consider $\varepsilon>0$ small enough. Let $\gamma_\varepsilon$ be the periodic orbit of $\mathcal{Z}_\varepsilon$ and $p_{\varepsilon}$ the associated fixed point of $P_\varepsilon$. Then, the stable and unstable sets of $\gamma_{\varepsilon}$ and $p_{\varepsilon} $ satisfy $W^{s,u}(p_\varepsilon)=W^{s,u}(\gamma_\varepsilon)\cap\{t=0\}$.
\end{proposition}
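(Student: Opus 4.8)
The plan is to prove the two inclusions $W^{s,u}(p_\varepsilon)\subset W^{s,u}(\gamma_\varepsilon)\cap\{t=0\}$ and $W^{s,u}(\gamma_\varepsilon)\cap\{t=0\}\subset W^{s,u}(p_\varepsilon)$ separately, working with the unstable case (the stable case being identical after time reversal, i.e. replacing $\mathcal{Z}_\varepsilon$ by $-\mathcal{Z}_\varepsilon$ and $T$ by $-T$, which interchanges the roles of the two sets). The key translation device is the relation $\Phi_\varepsilon(\tau;x,t)=(\varphi_\varepsilon(\tau;x,t),\tau+t)$ from~\eqref{23}, together with the fact that $P_\varepsilon=P_\varepsilon^0$ is exactly the return to the slice $\{t=0\}$, so that applying $P_\varepsilon$ $n$ times on a point of $U_\varepsilon^0$ corresponds to flowing by $\Phi_\varepsilon$ for time $nT$ and landing back on $\{t=0\}$ (this is where Remark~\ref{R1} is invoked, guaranteeing that $P_\varepsilon^n$ makes sense on the relevant points). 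Throughout, by $W^{s,u}(\gamma_\varepsilon)$ and $W^{s,u}(p_\varepsilon)$ we mean their intersections with $\mathcal{U}_\varepsilon$ and $U_\varepsilon^0$ respectively, as fixed in the text just before the statement.

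For the inclusion ``$\subset$'': let $x\in W^u(p_\varepsilon)$, i.e. $(x,0)\in U_\varepsilon^0$ lies in the saturation by $P_\varepsilon$ (in the Poincaré-map sense) of $W_{loc}^u(p_\varepsilon)$; equivalently $(P_\varepsilon)^{-n}(x)\to p_\varepsilon$ as $n\to\infty$, with all backward iterates well defined and inside $U_\varepsilon^0$. I want to show $(x,0)\in W^u(\gamma_\varepsilon)$, that is, $(x,0)$ lies on an orbit of $\mathcal{Z}_\varepsilon$ whose negative flowline converges to $\gamma_\varepsilon$. Flowing $(x,0)$ backward by $\Phi_\varepsilon$: for $\tau=-nT$ we get $\Phi_\varepsilon(-nT;x,0)=((P_\varepsilon)^{-n}(x),0)\to(p_\varepsilon,0)\in\gamma_\varepsilon$, and for intermediate times $\tau\in[-nT,-(n-1)T]$ the point $\Phi_\varepsilon(\tau;x,0)=\Phi_\varepsilon^{\tau+(n-1)T}\big(((P_\varepsilon)^{-(n-1)}(x),0)\big)$ stays within a controlled distance of the corresponding flowout of $(p_\varepsilon,0)$ — here I use continuity of the time-$s$ maps $\Phi_\varepsilon^s$ for $s\in[0,T]$ on the compact cylinder $\mathcal U_\varepsilon$ (Propositions~\ref{P1}--\ref{P3}) to turn the convergence along the discrete sequence into convergence along the continuous flow. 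Hence $d(\Phi_\varepsilon(\tau;x,0),\gamma_\varepsilon)\to0$ as $\tau\to-\infty$, so $(x,0)\in W^u(\gamma_\varepsilon)\cap\{t=0\}$. Conversely, if $(x,0)\in W^u(\gamma_\varepsilon)\cap\{t=0\}$, sampling the backward flowline at multiples of $T$ and projecting to $\{t=0\}$ produces the sequence $(P_\varepsilon)^{-n}(x)$, which converges to $\gamma_\varepsilon\cap\{t=0\}=\{p_\varepsilon\}$; this exhibits $x\in W^u(p_\varepsilon)$. The stable statement follows verbatim with $n\to+\infty$.

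The main obstacle I anticipate is not the convergence bookkeeping but the well-definedness issues intrinsic to the piecewise smooth setting: a priori a point of $W^u(\gamma_\varepsilon)$ need not have its whole backward orbit inside $\mathcal U_\varepsilon$, nor need its backward orbit cross $\Omega$ only at crossing points, so the discrete iterates $(P_\varepsilon)^{-n}(x)$ may fail to be defined. The fix is the same mechanism described in Remark~\ref{R1}: since we only care about $W^{s,u}(\gamma_\varepsilon)\cap\mathcal U_\varepsilon$ and $W^{s,u}(p_\varepsilon)\cap U_\varepsilon^0$, and since these local pieces are (as recorded in Section~\ref{Sec2}) close to the unperturbed manifolds $W_{loc}^{s,u}(\gamma_0)$ whose backward (resp. forward) orbits limit onto $\gamma_0\subset(\mathbb R^n\times\mathbb S^1)\setminus\Omega$, one may shrink $\mathcal U_\varepsilon$ so that the unstable piece is backward-invariant inside $\mathcal U_\varepsilon$ and crosses $\Omega$ only transversally; then Proposition~\ref{P1} keeps the number of crossings finite on each time-$T$ segment and Propositions~\ref{P2}--\ref{P3} keep every $\Phi_\varepsilon^s$ a homeomorphism, so all iterates exist. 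Once this is arranged, the argument above is just continuity plus the identity~\eqref{23}, and the proof closes.
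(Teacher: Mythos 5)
Your proof is correct and reaches the same conclusion, but your treatment of the harder inclusion is structurally different from the paper's. For $W^s(p_\varepsilon)\subset W^s(\gamma_\varepsilon)\cap\{t=0\}$ the paper argues by contradiction: assuming~\eqref{14} fails, it extracts a sequence of times $\tau_n=t_n+k_nT$ with $t_n\to t^*$, passes from~\eqref{17} to~\eqref{22} by continuity in the $t$-variable, and then contradicts~\eqref{22} by combining~\eqref{15} with the conjugacy of Proposition~\ref{P5}. You prove the (time-reversed) equivalent inclusion directly: decomposing $\tau\in[-nT,-(n-1)T]$ as $\tau=-(n-1)T+s$ with $s\in[-T,0]$, and using uniform continuity of $(s,z)\mapsto\Phi_\varepsilon^s(z,0)$ on a compact neighborhood of $p_\varepsilon$ to upgrade the discrete convergence $(P_\varepsilon)^{-n}(x)\to p_\varepsilon$ to convergence of the full backward flowline, and in fact in the phase-locked sense of~\eqref{14}, since you compare against $\Phi_\varepsilon^s(p_\varepsilon,0)$ pointwise rather than merely against the set $\gamma_\varepsilon$. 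This is cleaner: you bypass both the contradiction and the explicit appeal to Proposition~\ref{P5}, making transparent the continuity mechanism that the paper's subsequence argument uses implicitly. The easy inclusion is treated essentially the same way by both. Two minor slips worth fixing: the increment $\tau+(n-1)T$ lies in $[-T,0]$, so the time-$s$ maps you need are the backward ones rather than $s\in[0,T]$ as written; and $\mathcal{U}_\varepsilon$ is open, so the uniform continuity argument should be run on a compact neighborhood of $\gamma_\varepsilon$ disjoint from $\Omega$ (where the flow is smooth), which is available once $(P_\varepsilon)^{-n}(x)$ is close enough to $p_\varepsilon$.
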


\begin{proof}
	We recall that $\Phi_\varepsilon(\tau;x,t)$ is the solution of $\mathcal{Z}_\varepsilon$ with initial condition $\Phi_\varepsilon(0;x,t)=(x,t)$. Thus, observe that $\gamma_{\varepsilon}=\{\Phi_\varepsilon(\tau;p_{\varepsilon}):\tau\in\mathbb{R}\}$. Given $(r,0)\in W^s(\gamma_\varepsilon)\cap\{t=0\}$, we have
	\begin{equation}\label{14}
		\lim\limits_{\tau\to+\infty}|\Phi_\varepsilon(\tau;r,0)-\Phi_\varepsilon(\tau;p_{\varepsilon})|=0.
	\end{equation}
	In particular, it follows that
	\begin{equation}\label{15}
		\lim\limits_{n\to+\infty}\Phi_\varepsilon(nT;r,0)=p_\varepsilon,
	\end{equation}
	and thus $(r,0)\in W^s(p_\varepsilon)$. Reciprocally, given $(r,0)\in W^s(p_\varepsilon)$, we have that \eqref{15} holds. Suppose by contradiction that \eqref{14} does not hold. Then, there exist $\eta>0$ and a sequence $(\tau_n)_{n\in\mathbb{N}}\subset\mathbb{R}$, $\tau_n\to+\infty$, such that 
	\begin{equation}\label{16}
		|\Phi_\varepsilon(\tau_n;r,0)-\Phi_\varepsilon(\tau_n;p_{\varepsilon})|>\eta,
	\end{equation}
	for every $n\in\mathbb{N}$. For each $n\in\mathbb{N}$, let $t_n\in\mathbb{S}^1$ and $k_n\in\mathbb{Z}$ be such that $\tau_n=t_n+k_nT$. Since $p_\varepsilon^t$ is a fixed point of $P_\varepsilon^t$, for every $t\in\mathbb{S}^1$, it follows that $\Phi_\varepsilon(\tau_n;p_{\varepsilon})=p_\varepsilon^{t_n}$. Hence, we have from \eqref{16} that
	\begin{equation}\label{17}
		|\Phi_\varepsilon\bigl(t_n;(P_\varepsilon)^{k_n}(r,0)\bigr)-p_\varepsilon^{t_n}|>\eta,
	\end{equation}
	for every $n\in\mathbb{N}$.	Passing to a sub-sequence if necessary, we can assume that there exists $t^*\in\mathbb{S}^1$ such that $t_n\to t^*$. Hence, from relation~\eqref{17}, it follows that
	\begin{equation}\label{22}
		|\Phi_\varepsilon\bigl(t^*;(P_\varepsilon)^{k_n}(r,0)\bigr)-p_\varepsilon^{t^*}|\geqslant\eta,
	\end{equation}
	for every $n\in\mathbb{N}$ big enough. On the other hand, from \eqref{15} and the fact that $P_\varepsilon$ and $P_\varepsilon^{t^*}$ are topologically conjugated (see Proposition~\ref{P5}), we have that
		\[
		\lim\limits_{n\to+\infty}\Phi_\varepsilon\bigl(nT;\Phi_\varepsilon(t^*;r,0)\bigr)=p_\varepsilon^{t^*}.
		\]
	In particular, for sufficiently large $n\in\mathbb{N}$, we have
		\[
		\bigl|\Phi_\varepsilon\bigl(t^*;(P_\varepsilon)^{k_n}(r,0)\bigr)-p_\varepsilon^{t^*}\bigr|<\eta,
		\]
	contradicting \eqref{22}. The proof for the unstable sets $W^u(p_\varepsilon)$ and $W^u(\gamma_\varepsilon)$ follows similarly.
\end{proof}

\begin{remark}
Since $W^{s,u}(\gamma_\varepsilon)$ intersect $\Omega$ only in crossing points, it follows that they are given by the continuation of $W_{loc}^{s,u}(\gamma_\varepsilon)$ through the global solution of $\mathcal{Z}_\varepsilon$. Hence, we can write
\[W^s(\gamma_\varepsilon)=\bigcup_{\tau\leqslant0}\Phi_\varepsilon^\tau\bigl(W_{loc}^s(\gamma_\varepsilon)\bigr), \quad W^u(\gamma_\varepsilon)=\bigcup_{\tau\geqslant0}\Phi_\varepsilon^\tau\bigl(W_{loc}^u(\gamma_\varepsilon)\bigr).\]
This, in addition with Propositions~\ref{P2} and~\ref{P3} and the fact that $W_{loc}^{s,u}(\gamma_\varepsilon)$ are embedded sub-manifolds that do not intersect $\Omega$, ensures that $W^{s,u}(\gamma_\varepsilon)$ are topological manifolds that fail to be smooth only perhaps at the intersections with $\Omega$. From Proposition~\ref{P1} it follows that, for each $\tau\leqslant0$, $\Phi_\varepsilon^\tau\bigl(W_{loc}^s(\gamma_\varepsilon)\bigr)$ intersects $\Omega$ at most in a finite amount of curves. By making $\tau\to-\infty$ we obtain that $W^{s}(\gamma_\varepsilon)$ intersects $\Omega$ at most in a countable amount of curves. A similar conclusion holds for $W^u(\gamma_{\varepsilon})$.

Furthermore, taking into account the stable and unstable sets $W^{s,u}(p_\varepsilon)$, Proposition~\ref{P6} yields an analogous statement, where the intersection with the switching set $\Sigma$ consists of isolated points rather than curves.
\end{remark}

\section{Proofs of Theorem~\ref{Main1} and Proposition~\ref{MainCoro}}\label{Sec4}

Before we begin with the proof of Theorem~\ref{Main1}, we recall that $\Delta\subset W^u(p_{\varepsilon_0}^{t_0})\cap U_{\varepsilon_0}^{t_0}$ is a $u$-disk intersecting $W^s(p_{\varepsilon_0}^{t_0})\cap U_{\varepsilon_0}^{t_0}$ transversely at some point $q_{\varepsilon_0}^{t_0}\in U_{\varepsilon_0}^{t_0}\setminus\Omega^{t_0}$, and $D\subset W^u(p_{\varepsilon_0}^{t_0})\cap U_{\varepsilon_0}^{t_0}$ is any $u$-disk (see Figure~\ref{Fig12}).

Since $\varepsilon_0\in(0,\varepsilon^*)$ and $t_0\in\mathbb{S}^1$ are fixed throughout the entire proof, we shall omit them for simplicity. We will drop the intersection with $U_{\varepsilon_0}^{t_0}$ in the notation, as well. Moreover, since $\Omega^{t_0}$ is essentially a copy of $\Sigma$ at the layer $t=t_0$, we also write $\Sigma$ instead of $\Omega^{t_0}$. Therefore, we make the identifications $P:=P_{\varepsilon_0}^{t_0}$, $p:=p_{\varepsilon_0}^{t_0}$, $q:=q_{\varepsilon_0}^{t_0}$, and we simply write that $\Delta\subset W^u(p)$ is a $u$-disk intersecting $W^s(p)$ transversely at a point $q\not\in\Sigma$, and $D\subset W^u(p)$ is any $u$-disk.

Finally, we summarize the structure of the proof of Theorem~\ref{Main1} in a few words. First, we use of the results of Section~\ref{Sec3.1} to reduce the problem to a local one, in a neighborhood of the hyperbolic saddle $p$. This reduction is particularly useful when obtaining the proof of other kind of entanglements, such as the heteroclinic one, which is going to be addressed in Section~\ref{Sec5}. Then, we proof the result for the particular case in which $D\subset W^u(p)$ is a sufficiently small neighborhood of $p$ such that $D\cap\Sigma=\emptyset$. From this case we shall see at the end of the proof that the general case, in which $D\subset W^u(p)$ is any $u$-disk, will follow.

\subsection{Proof of Theorem~\ref{Main1}}

Since $p$ is a fixed point of $P$, it follows that there is a neighborhood $V\subset\mathbb{R}^n\setminus\Sigma$ of $p$ such that $P(V)\cap\Sigma=\emptyset$. Restricting $V$ if necessary, we can apply the Stable Manifold Theorem and take coordinates $(x_s,x_u)\in\mathbb{R}^s\times\mathbb{R}^u$ such that $p$ is the origin, and $W^s(p)\cap V$ and $W^u(p)\cap V$ are given by $x_u=0$ and $x_s=0$, respectively (see Figure~\ref{Fig3}).
\begin{figure}[ht]
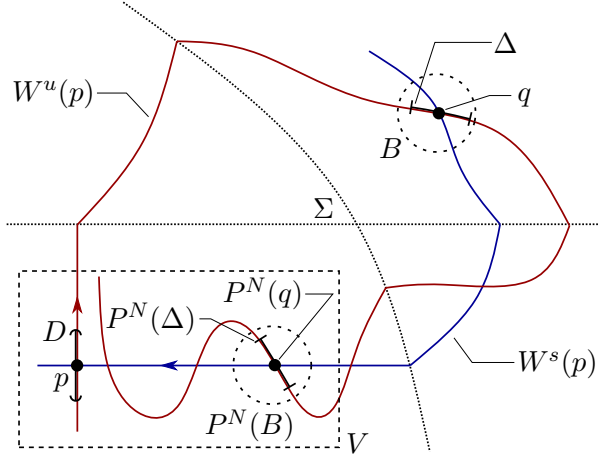

	\begin{center}
		\begin{overpic}[width=8cm]{Fig4.eps} 
			\put(85,58.5){$q$}
			\put(62,49){$B$}
			\put(81,66.5){$\Delta$}
			\put(51,39){$\Sigma$}
			\put(33,3){$P^N(B)$}
			\put(36,25){$P^N(q)$}
			\put(16.5,20.75){$P^N(\Delta)$}
			\put(8,11){$p$}
			\put(6,18.5){$D$}
			\put(56.5,0){$V$}
			\put(1,58.5){$W^u(p)$}
			\put(85,13.5){$W^s(p)$}
		\end{overpic}
	\end{center}
	\caption{Illustration of the set $V$. The stable (resp. unstable) set is represented in blue (resp. red). Colors available in the online version.}\label{Fig3}
\end{figure}
Since $q\in W^s(p)$, it follows that $P^n(q)\to p$ as $n\to\infty$. Thus, there exists $N\in\mathbb{N}$ such that $P^{N}(q)\in V$. In particular, $P^{N}(q)\not\in\Sigma$. Since $q\notin\Sigma$ as well, it follows from Proposition~\ref{P2} that $P^{N}$ is a $\mathcal{C}^\infty$-diffeomorphism in a neighborhood $B\subset\mathbb{R}^n\setminus\Sigma$ of $q$. Therefore, by restricting $\Delta$ if necessary, we have that $P^{N}(\Delta)\subset W^u(p)\setminus\Sigma$ is a $u$-disk intersecting $W^s(p)$ transversely at the point $P^{N}(q)$. Hence, we may assume, without lost of generality, that $q\in V$ and $\Delta\subset V$ (see Figure~\ref{Fig3}). 
	
As previously mentioned, we first consider the case in which $D\subset W^u(p)\setminus\Sigma$ is contained in a sufficiently small neighborhood of $p$. In particular, by restricting $D$ if necessary, we assume $D\subset V$.
	
Consider $\eta>0$. We shall prove that there exists $n\in\mathbb{N}$ big enough such that $P^n(\Delta)$ is contained in the $\eta$-neighborhood of $D$, in relation to the $\mathcal{C}^1$-topology.
	
Taking into account the coordinates $(x_s,x_u)$ in $V$, the map $P\colon V\to\mathbb{R}^n$ can be written as
	\[P(x_s,x_u)=(A^sx_s+P_s(x_s,x_u),A^ux_u+P_u(x_s,x_u)),\]
with $|A^s|\leqslant a<1$, $|(A^u)^{-1}|\leqslant a<1$, $P_s(0,x_u)\equiv0$, $P_u(x_s,0)\equiv0$ and 
\begin{equation}\label{4}
	\frac{\partial P_s}{\partial x_s}(0,0)=0, \quad \frac{\partial P_s}{\partial x_u}(0,0)=0, \quad \frac{\partial P_u}{\partial x_s}(0,0)=0, \quad \frac{\partial P_u}{\partial x_u}(0,0)=0.
\end{equation}
Let $\overline{V}$ be the topological closure of $V$ and consider 
	\[k:=\max\left\{\left|\frac{\partial P_i}{\partial x_j}(r_s,r_u)\right|\colon (r_s,r_u)\in\overline{V} \text{ and } \, i,j\in\{s,u\}\right\}.\]
From \eqref{4}, by restricting  $V$ if necessary, we can assume that $k\geqslant0$ is small enough such that
\begin{equation}\label{5}
	a_1:=a+k<1, \quad b:=a^{-1}-k>1, \quad k<\frac{(b-1)^2}{4}, \quad k<1.
\end{equation}
Now, let $v_0=(v_0^s,v_0^u)\in\mathbb{R}^s\times\mathbb{R}^u$ be a tangent to $\Delta$ at $q$. Since $\Delta$ is transversal to $W^s(p)$, it follows that $v_0^u\neq0$, which allows us to define the inclination 
	\[ 
		\lambda_0:=\frac{|v_0^s|}{|v_0^u|}. 
	\]
Let $n\geqslant1$ and
	\[q_n=P^n(q), \quad v_n=(v_n^s,v_n^u)=DP(q_{n-1})v_{n-1}, \quad \lambda_n=\frac{|v_n^s|}{|v_n^u|}.\]
		
We claim that there exists $n_0\in\mathbb{N}$, independent of $v_0$, such that	
\begin{equation}\label{7}
	\lambda_n\leqslant\frac{b-1}{4},
\end{equation}
for every $n\geqslant n_0$.	
		
Indeed, let $v^*$ be a unitary vector tangent to $\Delta$ at $q$ with maximal inclination $\lambda^*$. In order to simplify the notation, let $P_{i,j}^n=\frac{\partial P_i}{\partial x_j}(q_{n-1})$. We  notice that $P_{u,s}^n=0$, since $P_u(x_s,0)\equiv0$. Therefore, from \eqref{5}, we have that
\begin{equation}\label{9}
	\begin{array}{rl}
		\displaystyle \lambda_n &\displaystyle= \frac{|v_n^s|}{|v_n^u|}=\frac{|(A^s+P_{s,s}^n)v_{n-1}^s+P_{s,u}^nv_{n-1}^u|}{|(A^u+P_{u,u}^n)v_{n-1}^u|} \vspace{0.2cm} \\
		
		&\displaystyle \leqslant\frac{(a+k)|v_{n-1}^s|+k|v_{n-1}^u|}{(a^{-1}-k)|v_{n-1}^u|} =\frac{(a+k)\lambda_{n-1}+k}{a^{-1}-k} \vspace{0.2cm} \\
			
		&\displaystyle= \frac{a_1\lambda_{n-1}+k}{b}<\frac{1}{b}\lambda_{n-1}+k\frac{1}{b}.
	\end{array}
\end{equation}
By recursively applying~\eqref{9}, in addition with properties of geometric series and \eqref{5}, we get
\begin{equation}\label{6}
	\lambda_n\leqslant\frac{\lambda_0}{b^n}+k\sum_{i=1}^{n}\frac{1}{b^i}<\frac{\lambda_0}{b^n}+\frac{k}{b-1}\leqslant\frac{\lambda^*}{b^n}+\frac{k}{b-1} .
\end{equation}
The fact that $\lambda^*/b^n\to0$ and $k/(b-1)<(b-1)/4$ concludes the claim.
	
Consider now $V_1=(-\delta,\delta)^s\times D$ with $\delta>0$ small enough such that $\delta<\eta$, $V_1\subset V$, and
\vspace{0.2cm}
\begin{equation}\label{25}
	k_1:=\max\left\{\left|\frac{\partial P_i}{\partial x_j}(r_s,r_u)\right|\colon (r_s,r_u)\in\overline{V_1} \text{ and } \, i,j\in\{s,u\}\right\}<\min\{\eta,k\}.
\end{equation}
Let $n_1\geqslant n_0$ be large enough so that $q_{n_1}\in V_1$. From \eqref{7}, it follows that there exists a sufficiently small $u$-neighborhood $\Delta_0$ of $q_{n_1}$, with $\overline{\Delta_0}\subset P^{n_1}(\Delta)\cap V_1$, such that for any $r\in\Delta_0$ and for any tangent vector $w_0=(w_0^s,w_0^u)$ of $\Delta_0$ at $r$, we have
\begin{equation}\label{8}
	\mu_0:=\frac{|w_0^s|}{|w_0^u|}\leqslant\frac{b-1}{2}.
\end{equation}
For each $n\in\mathbb{N}$, let $\Delta_n\subset\Delta_0$ be a sufficiently small $u$-neighborhood of $q_{n_1}$ such that $P^k(\Delta_n)\subset V_1$ for $k\in\{1,\dots,n\}$ (from Remark~\ref{R1}, we recall that we may not have control over all the infinitely many iterates of $\Delta_0$). Given $n\in\mathbb{N}$, $r\in\Delta_n$, and a vector $(w_0^s,w_0^u)\in\mathbb{R}^s\times\mathbb{R}^u$ tangent to $\Delta_n$ at $r$, consider
	\[r_k=P^k(r), \quad w_k=(w_k^s,w_k^u)=DP(r_{k-1})w_{k-1}, \quad \mu_k=\frac{|w_k^s|}{|w_k^u|},\]
for $k\in\{0,\dots,n\}$ (see Figure~\ref{Fig4}).
\begin{figure}[ht]
	\begin{center}
		\begin{overpic}[width=9cm]{Fig5.eps} 
			\put(101,13){$V$}
			\put(85,23){$V_1$}
			\put(86,42){$W^s(p)$}
			\put(16,69){$W^u(p)$}
			\put(67,47.5){$\Delta_n$}
			\put(75.5,31){$\Delta_0$}
			\put(69.5,36.75){$w_0$}
			\put(53.5,29.5){$r$}
			\put(48,43.5){$q_{n_1}$}
			\put(23.5,27.5){$r_n$}
			\put(24.5,34.5){$w_n$}
			\put(24,49){$P^n(\Delta_n)$}
			\put(23.5,43){$q_{n+n_1}$}
			\put(5.5,7.5){$D$}
		\end{overpic}
	\end{center}
	\caption{Illustration of the sets $\Delta_n$ and $P^n(\Delta_n)$. The stable (resp. unstable) set is represented in blue (resp. red). Colors available in the online version.}\label{Fig4}
\end{figure}

Similarly to \eqref{9}, let us rewrite $P_{i,j}^n=\frac{\partial P_i}{\partial x_j}(r_{n-1})$, and observe that, from \eqref{5} and \eqref{8}, we have
\begin{equation}\label{10}
	\begin{array}{ll}
		\displaystyle \mu_1 &\displaystyle= \frac{|w_1^s|}{|w_1^u|}=\frac{|(A^s+P_{s,s}^n)w_{0}^s+P_{s,u}^nw_{0}^u|}{|P_{u,s}^nw_{0}^s+(A^u+P_{u,u}^n)w_{0}^u|} \vspace{0.2cm} \\
		
		&\displaystyle \leqslant\frac{(a+k)|w_{0}^s|+k_1|w_{0}^u|}{(a^{-1}-k)|w_{0}^u|-k|w_{0}^s|} = \frac{(a+k)\mu_{0}+k_1}{(a^{-1}-k)-k\mu_{0}} \vspace{0.2cm} \\
		
		&\displaystyle <\frac{\mu_{0}+k_1}{b-\frac{1}{2}k(b-1)}<\frac{\mu_0+k_1}{b-\frac{1}{2}(b-1)}=\frac{\mu_0+k_1}{\frac{1}{2}(b+1)}.
	\end{array}
\end{equation}
Taking $b_1:=\frac{1}{2}(b+1)>1$ and relation \eqref{10} into account, we notice that
\begin{equation}\label{24}
	\mu_1<\frac{\mu_0}{b_1}+k_1\frac{1}{b_1}.
\end{equation}
From \eqref{5}, \eqref{25}, \eqref{8}, and \eqref{24}, we obtain
\begin{equation}\label{26}
	\begin{array}{ll}
		\displaystyle \mu_1 &\displaystyle< \frac{1}{b_1}(\mu_0+k_1)<\frac{1}{b_1}\left(\frac{b-1}{2}+k\right)<\frac{1}{b_1}\left(\frac{b-1}{2}+\left(\frac{b-1}{2}\right)^2\right) \vspace{0.2cm} \\
		&\displaystyle=\frac{1}{b_1}\frac{b-1}{2}\left(1+\frac{b-1}{2}\right)=\frac{1}{b_1}\frac{b-1}{2}b_1=\frac{b-1}{2}.
	\end{array}
\end{equation}	
That is, $\mu_1$ also satisfies \eqref{8}. By recursively applying \eqref{10}, we notice that 
\begin{equation}\label{27}
	\mu_n\leqslant\frac{\mu_0}{b_1^n}+\frac{k_1}{b_1-1},
\end{equation}
similar to \eqref{6}. By taking $w^*$ unitary and with maximal inclination $\mu^*$, among all the points $r\in\overline{\Delta_0}$, we obtain from~\eqref{27} that
\[
	\mu_n\leqslant\frac{\mu_0}{b_1^n}+\frac{k_1}{b_1-1}\leqslant\frac{\mu^*}{b_1^n}+\frac{k_1}{b_1-1}.
\]
Hence, we conclude that there exists $n_2\in\mathbb{N}$, depending neither on $r$ nor on $w_0$, such that 
\begin{equation}\label{11}
	\mu_n\leqslant\left(1+\frac{1}{b_1-1}\right)\eta,
\end{equation}
for every $n\geqslant n_2$. 

From \eqref{11} and the fact that $V_1$ itself lies in the $\eta$-neighborhood of $D$ in relation to the $\mathcal{C}^0$-topology, we conclude that $P^{n_2}(\Delta_{n_2})$ lies in the $\eta$-neighborhood of $D$ in relation to the $\mathcal{C}^1$-topology. This proves the theorem for the case in which $D\subset W^u(p)\cap V$ is a sufficiently small $u$-neighborhood of $p$.
	
The general case, where $D\subset W^u(p)$ is any $u$-neighborhood, is as follows. Let $D\subset W^u(p)$ be a $u$-disk and observe that the iterates $P^{-m}(D)$ get arbitrarily close to $p$. If $D\cap\Sigma=\emptyset$, then, from Proposition~\ref{P2}, we have that $P^{-m}$ is a $\mathcal{C}^\infty$-diffeomorphism in a neighborhood of $D$ for $m\in\mathbb{N}$ big enough. From the previous case we can take $P^n(\Delta_n)$ arbitrarily $\mathcal{C}^1$-close to $P^{-m}(D)$ and thus $P^{n+m}(\Delta_n)$ contain a $u$-disk arbitrarily $\mathcal{C}^1$-close to $D$. If $D\cap\Sigma\neq\emptyset$, then the same reasoning applies, but restricted to the $\mathcal{C}^0$-topology due to Proposition~\ref{P3}. This concludes the proof of Theorem~\ref{Main1}.

\subsection{Consequences of Theorem~\ref{Main1}}

We now present two consequences of Theorem~\ref{Main1}. In the first, discussed in Remark~\ref{Remark2}, we show that the iterates of $\Delta_n$ (see Figure~\ref{Fig4}) exhibit a stretching behavior even as their inclination tends to zero (see Figure~\ref{Fig10}).
\begin{figure}[ht]
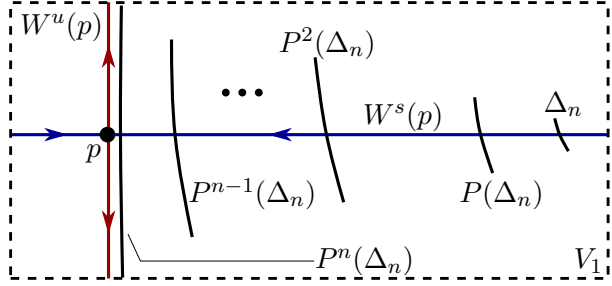

	\begin{center}
		\begin{overpic}[width=8cm]{Fig13.eps} 
			\put(94,2){$V_1$}
			\put(59,26){$W^s(p)$}
			\put(2,41){$W^u(p)$}
			\put(13,20.5){$p$}
			\put(89,28){$\Delta_n$}
			\put(75,13){$P(\Delta_n)$}
			\put(45,38){$P^2(\Delta_n)$}
			\put(30,13){$P^{n-1}(\Delta_n)$}
			\put(51,2){$P^n(\Delta_n)$}
		\end{overpic}
	\end{center}
	\caption{Illustration of the stretching of $\Delta_n$. The stable (resp. unstable) set is represented in blue (resp. red). Colors available in the online version.}\label{Fig10}
\end{figure}
In the sequel, we prove Proposition~\ref{MainCoro}, which establishes in the entanglement between $W^{s}(\gamma_{\varepsilon_0})$ and $W^{u}(\gamma_{\varepsilon_0})$ (see Figure~\ref{Fig13}).

\begin{remark}\label{Remark2}
	Similarly to \eqref{10}, we notice that
		\[\begin{array}{l}
			\displaystyle |w_{n+1}^u|=|(A^u+P_{u,u}^n)w_n^u+P_{u,s}^nw_n^s|\geqslant |(A^u+P_{u,u}^n)w_n^u|-|P_{u,s}^nw_n^s| \vspace{0.2cm} \\
			\displaystyle \qquad\qquad\geqslant a^{-1}|w_n^u|-|P_{u,u}^nw_n^u|-|P_{u,s}^nw_n^s|\geqslant a^{-1}|w_n^u|-k(|w_n^u|+|w_n^s|).
		\end{array}\]
	Therefore we have that
		\[\frac{|w_{n+1}^u|}{|w_n^u|}\geqslant a^{-1}-k(1+\mu_n)=b-k\mu_n>1,\]
	provided $V$ is sufficiently small and $n$ sufficiently large. Hence the inclination $\mu_n$ converges to zero at the same time that $|w_n|$ diverges, growing at a ratio that converges to $b>1$. 
\end{remark}

\begin{proof}[Proof of Proposition~\ref{MainCoro}]
	Let $t_0\in\mathbb{S}^1$ be such that $W^u(p_{\varepsilon_0}^{t_0})$ and $W^s(p_{\varepsilon_0}^{t_0})$ intersect transversely at some point $q_{\varepsilon_0}^{t_0}\in U^{t_0}_{\varepsilon_0}\setminus\Omega^{t_0}$. Given $t\in\mathbb{S}^1$, it follows from Proposition~\ref{P5} that $W^u(p_{\varepsilon_0}^{t})$ and $W^s(p_{\varepsilon_0}^{t})$ intersects at the point $q_{\varepsilon_0}^t:=\Phi_{\varepsilon}(t-t_0;q_{\varepsilon_0}^{t_0},t_0)$. Since $q_{\varepsilon_0}^{t_0}\not\in\Omega^{t_0}$, from Proposition~\ref{P2} we know that the conjugation map $\Phi_\varepsilon^{t-t_0}$ is a diffeomorphism in a neighborhood of $q_{\varepsilon_0}^{t_0}$ except perhaps when $q_{\varepsilon_0}^t\in\Omega^t$. On the other hand, Proposition~\ref{P1} asserts that there are at most finitely many $\{t_1,\dots,t_k\}$ such that $q_{\varepsilon_0}^{t_k}\in\Omega^{t_k}$. The result now follows from the fact that transversal intersections are preserved by diffeomorphisms.
\end{proof}

\section{Further thoughts}\label{Sec5}

As previously discussed in Section~\ref{Sec2}, Theorem~\ref{Main1} holds in a more general context, which is detailed in the sequel. 

First let $Z$ be as in Section~\ref{Sec2}, i.e. an autonomous smooth vector field defined at $\mathbb{R}^n$ with switching set $\Sigma$. However, instead of a homoclinic connection $\Gamma\subset W^s(p)\cap W^u(p)$ (see Figure~\ref{Fig8}), we assume the existence of a heteroclinic connection $\Gamma\subset W^s(p)\cap W^u(r)$ intersecting $\Sigma$ only in crossing points, with $r\in\mathbb{R}^n\setminus\Sigma$ being a hyperbolic saddle distinct from $p\in\mathbb{R}^n\setminus\Sigma$. 

Consider now a non-autonomous perturbation $Z_\varepsilon$ of $Z$, similarly to~\eqref{28}, and the autonomous vector field $\mathcal{Z}_\varepsilon$ obtained by adopting $\dot t=1$, similarly to~\eqref{21}. Analogously to the construction made in Section~\ref{Sec2}, there is $\varepsilon^*>0$ small enough such that for each $0<\varepsilon<\varepsilon^*$, there exist a neighborhood $\mathcal{U}_{\varepsilon}$ of $\Gamma\times\mathbb{S}^1$ such that for each $t_0\in\mathbb{S}^1$ we have a well defined Poincaré map $P_\varepsilon^{t_0}\colon U_\varepsilon^{t_0}\to V_\varepsilon^{t_0}:=P_\varepsilon^{t_0}(U_\varepsilon^{t_0})$, associated with $\mathcal{Z}_\varepsilon$, and given by
\begin{equation}\label{29}
	P_\varepsilon^{t_0}(x)=\Phi_\varepsilon(T;x,t_0),
\end{equation}
where $U_\varepsilon^{t_0}:=\mathcal{U}_\varepsilon\cap\{t=t_0\}$ and $\Phi_\varepsilon(\tau;x,t)$ is the solution of $\mathcal{Z}_\varepsilon$ with initial condition $\Phi_\varepsilon(0;x,t)=(x,t)$. As a consequence, it follows that $W^s(p_{\varepsilon_0}^{t_0})\cap\mathcal{U}_{\varepsilon_0}$ and $W^u(r_{\varepsilon_0}^{t_0})\cap\mathcal{U}_{\varepsilon_0}$ also get entangled, similarly to what is represented in Figure~\ref{Fig7}$(b)$. The proof of this fact follows similarly to the proof of Theorem~\ref{Main1}, i.e., we first reduce the problem to a local one in a neighborhood of $p$ and then we repeat the proof. This leads us to the following result.

\begin{main}[$\lambda$-lemma for heteroclinic tangles of PSVF]\label{Main3}
	Let $P_\varepsilon^{t}$ denote the time-$T$-map defined in~\eqref{29}, and let $p_{\varepsilon}^{t}$ and $r_\varepsilon^t$ be two hyperbolic saddles. Suppose that for some $0<\varepsilon_0<\varepsilon^*$ and $t_0\in\mathbb{S}^1$ there exists a $u$-disk $\Delta\subset W^u(r_{\varepsilon_0}^{t_0})\cap U^{t_0}_{\varepsilon_0}$ intersecting $W^s(p_{\varepsilon_0}^{t_0})\cap U^{t_0}_{\varepsilon_0}$ transversely at some point $q_{\varepsilon_0}^{t_0}\in U^{t_0}_{\varepsilon_0}\setminus\Omega^{t_0}$. Then for every $u$-disk $D\subset W^u(p_{\varepsilon_0}^{t_0})\cap U^{t_0}_{\varepsilon_0}$ we have that 
		\[\bigcup_{n=0}^{\infty}(P_{\varepsilon_0}^{t_0})^n\bigl(\Delta\cap U^{t_0}_{\varepsilon_0}\bigr)\]
	contains $u$-disks arbitrarily close in the $\mathcal{C}^1$-topology (resp. in the $\mathcal{C}^0$-topology) to $D$ when $D\cap\Omega^{t_0}=\emptyset$ (resp. when $D\cap\Omega^{t_0}\neq\emptyset$).
\end{main}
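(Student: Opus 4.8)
\medskip
\noindent\textbf{Proof proposal for Theorem~\ref{Main3}.}
The plan is to follow the proof of Theorem~\ref{Main1} almost line by line, exploiting the fact that the local inclination estimates carried out near the saddle $p$ in Section~\ref{Sec4} never use that $\Delta$ lies in $W^u(p)$: they use only that $\Delta$ is a $u$-disk meeting $W^s(p)$ transversely at a point outside $\Sigma$. As in Section~\ref{Sec4} I would fix $0<\varepsilon_0<\varepsilon^*$ and $t_0\in\mathbb{S}^1$, suppress them from the notation, identify $\Omega^{t_0}$ with $\Sigma$, and write $P:=P_{\varepsilon_0}^{t_0}$, $p:=p_{\varepsilon_0}^{t_0}$, $r:=r_{\varepsilon_0}^{t_0}$ and $q:=q_{\varepsilon_0}^{t_0}$. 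First I would record that the whole setup transfers verbatim from Section~\ref{Sec2} and Section~\ref{Sec3}: since the heteroclinic connection $\Gamma\subset W^s(p)\cap W^u(r)$ meets $\Sigma$ only in crossing points, there is a neighborhood $\mathcal{U}_\varepsilon$ of $\Gamma\times\mathbb{S}^1$ on which $P_\varepsilon^{t}$ is a well defined homeomorphism and a piecewise smooth diffeomorphism; and since neither $p$ nor $r$ lies on $\Sigma$, both periodic orbits—hence both saddles $p_\varepsilon$, $r_\varepsilon$—persist together with their local invariant manifolds, exactly as in Section~\ref{Sec2}. I would also note that transversality of $\Delta$ with $W^s(p)$ at a single point forces $u=\dim W^u(p)$, so the dimension bookkeeping is unchanged.

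The one genuinely new step is the reduction to a neighborhood of $p$. Since $q\in W^s(p)$ we have $P^n(q)\to p$, so there is $N\in\mathbb{N}$ with $P^N(q)\in V$, where $V\subset\mathbb{R}^n\setminus\Sigma$ is the Stable Manifold neighborhood of $p$ (with $P(V)\cap\Sigma=\emptyset$) used in the proof of Theorem~\ref{Main1}. The arc of orbit of $q$ under $\mathcal{Z}_{\varepsilon_0}$ over $[0,NT]$ stays in $\mathcal{U}_{\varepsilon_0}$ and meets $\Omega$ only in crossing points; since moreover $q\notin\Sigma$ and $P^N(q)\notin\Sigma$, Proposition~\ref{P2} gives that $P^N$ is a $\mathcal{C}^\infty$-diffeomorphism on a neighborhood $B\subset\mathbb{R}^n\setminus\Sigma$ of $q$. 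Shrinking $\Delta$, the image $P^N(\Delta)\subset W^u(r)\setminus\Sigma$ is then a $u$-disk meeting $W^s(p)$ transversely at $P^N(q)\in V$; replacing $\Delta$ by $P^N(\Delta)$ and $q$ by $P^N(q)$, I may assume $q\in V$ and $\Delta\subset V$. At this moment the heteroclinic geometry has played its entire role and is no longer visible: I am left with a $u$-disk in $V$ transverse to $W^s(p)$ at a point $q\notin\Sigma$, which is precisely the configuration reached, after the analogous reduction, in the proof of Theorem~\ref{Main1}.

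From this point on the argument of Section~\ref{Sec4} applies word for word. I would introduce the adapted coordinates $(x_s,x_u)$ near $p$, the constants $a,a_1,b,k$ subject to~\eqref{5}, the inclination recursion for $\lambda_n$ along $q_n=P^n(q)$ and then for $\mu_n$ along the shrinking disks $\Delta_n\subset\Delta_0\subset P^{n_1}(\Delta)\cap V_1$, and conclude the existence of $n_2\in\mathbb{N}$ with $P^{n_2}(\Delta_{n_2})$ inside the $\eta$-neighborhood of $D$ in the $\mathcal{C}^1$-topology whenever $D\subset W^u(p)\cap V$ is a small $u$-neighborhood of $p$. The passage to an arbitrary $u$-disk $D\subset W^u(p)$ is identical to the homoclinic case: one pulls $D$ back by $P^{-m}$ toward $p$—a $\mathcal{C}^\infty$-diffeomorphism on a neighborhood of $D$ by Proposition~\ref{P2} if $D\cap\Sigma=\emptyset$, only a homeomorphism by Proposition~\ref{P3} if $D\cap\Sigma\neq\emptyset$—applies the previous case to $P^{-m}(D)$, and pushes forward again, using that the $\mathcal{C}^\infty$-diffeomorphisms involved preserve both the $u$-disk property and transversality, and that Remark~\ref{R1} lets us choose $\Delta_0$ with as many forward iterates available as required. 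This produces $u$-disks inside $\bigcup_{n\ge 0}P^n(\Delta)$ arbitrarily $\mathcal{C}^1$-close (resp. $\mathcal{C}^0$-close) to $D$ when $D\cap\Omega^{t_0}=\emptyset$ (resp. $D\cap\Omega^{t_0}\neq\emptyset$), which is the assertion of Theorem~\ref{Main3}.

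I do not expect a substantial obstacle, since the analytic heart of the statement is already contained in the local inclination estimates of Section~\ref{Sec4}, and those are blind to where the transverse disk came from. The one place that warrants genuine care is the reduction step: one must make sure that $P^N$ is an honest smooth diffeomorphism near $q$ (not merely a homeomorphism), which is exactly why the hypothesis $q\notin\Omega^{t_0}$ is imposed, and one must verify that the relevant forward iterates $P^{n_1}(\Delta),P^{n_1+1}(\Delta),\dots$ remain inside the domain $U^{t_0}_{\varepsilon_0}$ on which $P$ is defined—a point handled, as in Remark~\ref{R1}, by first shrinking $\mathcal{U}_{\varepsilon}$ so that the prescribed number of iterates exists and only then choosing $\Delta_n$ small enough.
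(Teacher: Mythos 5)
Your proposal is correct and follows essentially the same route as the paper, which for Theorem~\ref{Main3} simply remarks that the proof proceeds as for Theorem~\ref{Main1} by first reducing to a local argument near $p$ and then repeating the inclination estimates. You have correctly identified the key observation that the local estimates in Section~\ref{Sec4} only use that $\Delta$ is a $u$-disk transverse to $W^s(p)$ at a point $q\notin\Sigma$ (not that $\Delta\subset W^u(p)$), and your handling of the reduction step via Proposition~\ref{P2} and of the domain bookkeeping via Remark~\ref{R1} matches the paper's argument.
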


In particular, similarly to Proposition~\ref{MainCoro}, there is also an entanglement between the stable and unstable sets $W^s(\gamma^p_\varepsilon)$ and $W^u(\gamma^r_\varepsilon)$, in relation to $\mathcal{Z}_\varepsilon$, of the periodic orbits $\gamma^p_\varepsilon$ and $\gamma^r_\varepsilon$ related to $p_\varepsilon$ and $r_\varepsilon$, similarly to what is represented Figure~\ref{Fig13}. 

In the non-perturbative context and for the sake of completeness, let now $Z=Z(x,t)$ be itself a non-autonomous piecewise smooth vector field with switching set $\Sigma$, $T$-periodic in $t$, $T>0$, and with a $T$-periodic hyperbolic orbit $\sigma\subset\mathbb{R}^n\setminus\Sigma$ of saddle type (i.e. $W^{s,u}(\sigma)\setminus\sigma\neq\emptyset$). Let $\mathcal{Z}$ be the autonomous piecewise smooth vector field obtained in the extended phase space of $Z$ and with $\Omega:=\Sigma\times\mathbb{S}^1$ as switching set, i.e. $\mathcal{Z}$ is given by
	\[\dot x= Z(x,t), \quad \dot t=1,\]
where $(x,t)\in\mathbb{R}^n\times\mathbb{R}/T\mathbb{Z}=\mathbb{R}^n\times\mathbb{S}^1$.
Let also $\Phi(\tau;x,t)$ denote the solution of $\mathcal{Z}$ with initial condition $\Phi(0;x,t)=(x,t)$. 

Let $\gamma$ be the periodic orbit of $\mathcal{Z}$ obtained by considering $\sigma$ in the extended phase space. Consider a point $(q,t_0)\in W^s(\gamma)\setminus\gamma$ and suppose that $\{\Phi(\tau;q,t_0)\colon\tau\geqslant0\}$ intersects $\Omega$ only at crossing points. It follows from Section~\ref{Sec3.1} that for each $t\in\mathbb{S}^1$ we have a well defined Poincaré map 
\begin{equation}\label{30}
	P^t(x)=\Phi(T;x,t),
\end{equation}
defined in a neighborhood of $q^{t_0+t}:=\Phi(t;q,t_0)$, relatively to $\mathbb{R}^n\times\{t_0+t\}$, and satisfying the same properties stated at Section~\ref{Sec3.2}. In particular, for any two $t_1$, $t_2\in\mathbb{S}^1$ we have that the two respective Poincar\'e maps are conjugated and $q^{t_0+t}\not\in\Omega$ for all $t\in\mathbb{S}^1$, except possibly by finitely many.

For simplicity from now on we consider only the Poincar\'e map $P:=P^0$ given by $t_0+t=0$, set the hyperbolic saddle $p:=\gamma\times\{0\}$, denote $q:=q^0$ and suppose without loss of generality that $q\not\in\Omega^{0}$, where we recall that $\Omega^0=\Omega\cap\{t=0\}$. Notice also that $p\not\in\Omega^0$, since $\sigma\subset\mathbb{R}^n\setminus\Sigma$.

Therefore, if $W$ is an invariant set of $P$ having a $u$-disk $\Delta$ intersecting $W^s(p)$ transversely at $q$, then it follows analogously to Theorem~\ref{Main1} that its iterates accumulates at $W^{u}(p)$, similarly to Figure~\ref{Fig6}. This leads us to the following result.

\begin{main}[$\lambda$-lemma for non-autonomous PSVF]\label{Main4}
	Let $P^{t}$ denote the time-$T$-map defined in~\eqref{30}, and let $p^{t}$ and a hyperbolic saddle. Suppose that for some $t_0\in\mathbb{S}^1$ there exists a $u$-disk $\Delta\subset U^{t_0}$ intersecting $W^s(p^{t_0})\cap U^{t_0}$ transversely at some point $q^{t_0}\in U^{t_0}\setminus\Omega^{t_0}$. Then for every $u$-disk $D\subset W^u(p^{t_0})\cap U^{t_0}$ we have that 
		\[\bigcup_{n=0}^{\infty}(P^{t_0})^n(\Delta)\]
	contains $u$-disks arbitrarily close in the $\mathcal{C}^1$-topology (resp. in the $\mathcal{C}^0$-topology) to $D$ when $D\cap\Omega^{t_0}=\emptyset$ (resp. when $D\cap\Omega^{t_0}\neq\emptyset$).
\end{main}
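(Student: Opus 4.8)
The plan is to follow the proof of Theorem~\ref{Main1} almost line by line. The only structural novelty here is that $\Delta$ is merely a $u$-disk transverse to $W^s(p^{t_0})$ and is not assumed to lie in an unstable set; but every estimate carried out in Section~\ref{Sec4} uses only this transversality (through the inclination $\lambda_0=|v_0^s|/|v_0^u|$ being finite at $q$), so nothing in the argument is lost, and one recovers the accumulation of the iterates at $W^u(p^{t_0})$ exactly as in Figure~\ref{Fig6}.

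First I would reduce to the layer $t=0$. By the conjugacy recorded in Section~\ref{Sec5} for the maps $P^t$ (the non-autonomous analogue of Proposition~\ref{P5}), $P^{t_0}$ is conjugate to $P:=P^0$ via the piecewise smooth diffeomorphism $\Phi^{-t_0}$; since $q^{t_0}\notin\Omega^{t_0}$ by hypothesis and $p^{t_0}\notin\Omega^{t_0}$ holds automatically because $\sigma\subset\mathbb{R}^n\setminus\Sigma$, it suffices to prove the statement for $P$, $p:=\gamma\times\{0\}$, $q:=q^0\notin\Omega^0$, and a $u$-disk $D\subset W^u(p)\cap U^0$. Next, since $q\in W^s(p)$ we have $P^n(q)\to p$; choosing a neighborhood $V\subset(\mathbb{R}^n\times\{0\})\setminus\Omega^0$ of $p$ with $P(V)\cap\Omega^0=\emptyset$ and an $N$ with $P^N(q)\in V$, Proposition~\ref{P2} (applicable as $q,P^N(q)\notin\Omega^0$) shows that $P^N$ is a $\mathcal{C}^\infty$-diffeomorphism near $q$, so after replacing $\Delta$ by $P^N(\Delta)$ we may assume $q\in V$ and $\Delta\subset V$. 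Shrinking $V$ further and applying the Stable Manifold Theorem for the fixed point $p$ of $P$ (which is smooth near $p$ by Proposition~\ref{P2}), I would take coordinates $(x_s,x_u)\in\mathbb{R}^s\times\mathbb{R}^u$ on $V$ with $p$ at the origin, $W^s(p)\cap V=\{x_u=0\}$, $W^u(p)\cap V=\{x_s=0\}$, and $P(x_s,x_u)=(A^sx_s+P_s,A^ux_u+P_u)$ subject to the same normalizations as in Section~\ref{Sec4}: $|A^s|\leqslant a<1$, $|(A^u)^{-1}|\leqslant a<1$, $P_s(0,x_u)\equiv0$, $P_u(x_s,0)\equiv0$, and vanishing first derivatives at the origin.

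With this normal form I would then reproduce verbatim the three estimates of the proof of Theorem~\ref{Main1}: (i) the inclinations $\lambda_n$ of tangent vectors to $P^n(\Delta)$ at $q_n=P^n(q)$ satisfy $\lambda_n\leqslant(b-1)/4$ for all $n\geqslant n_0$, with $n_0$ independent of the chosen tangent vector, obtained by the recursion $\lambda_n<b^{-1}\lambda_{n-1}+kb^{-1}$ together with $k/(b-1)<(b-1)/4$; (ii) fixing the thin box $V_1=(-\delta,\delta)^s\times D$ with $\delta<\eta$, $V_1\subset V$, and taking a small $u$-disk $\Delta_0$ around some $q_{n_1}\in V_1$ together with sub-disks $\Delta_n\subset\Delta_0$ small enough that $P^k(\Delta_n)\subset V_1$ for $k\leqslant n$, the inclinations $\mu_n$ of tangents to $P^n(\Delta_n)$ stay below $(b-1)/2$ and eventually below $(1+1/(b_1-1))\eta$; hence (iii) $P^{n_2}(\Delta_{n_2})$ lies in the $\eta$-neighborhood of $D$ in the $\mathcal{C}^1$-topology, which settles the case where $D$ is a sufficiently small $u$-neighborhood of $p$. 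For a general $D\subset W^u(p)\cap U^0$ the backward iterates $P^{-m}(D)$ approach $p$: if $D\cap\Omega^0=\emptyset$ then Proposition~\ref{P2} makes $P^{-m}$ a $\mathcal{C}^\infty$-diffeomorphism near $D$ for large $m$, so $\mathcal{C}^1$-closeness of some $P^n(\Delta_n)$ to $P^{-m}(D)$ transfers to $\mathcal{C}^1$-closeness of $P^{n+m}(\Delta_n)$ to $D$; if $D\cap\Omega^0\neq\emptyset$ the same argument applies in the $\mathcal{C}^0$-topology via Proposition~\ref{P3}.

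The main obstacle is the phenomenon flagged in Remark~\ref{R1}: a priori the Poincaré map can be iterated only finitely many times before an orbit leaves the region where crossings are transversal, so the domains on which all the iterations above make sense must be arranged in advance. This is handled exactly as in the perturbative case. The forward orbit $\{\Phi(\tau;q,t_0):\tau\geqslant0\}$ together with the periodic orbit $\gamma$ is a compact subset of $\mathbb{R}^n\times\mathbb{S}^1$ meeting $\Omega$ only at crossing points; hence, by Propositions~\ref{P1}, \ref{P2} and~\ref{P3}, for any prescribed $m$ there is a neighborhood of this compact set on which $P$ is a well-defined homeomorphism that may be applied at least $m$ times. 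Shrinking $V$ and the disks $\Delta_n$ so that all the required iterates remain inside such a neighborhood makes every step of the argument legitimate, and no genuinely new difficulty arises beyond this bookkeeping; this is precisely why the proof of Theorem~\ref{Main1} transports to the present non-autonomous, non-perturbative setting with only cosmetic changes.
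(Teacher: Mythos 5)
Your proof is correct and follows exactly the route the paper prescribes: the paper presents Theorem~\ref{Main4} as a direct analogue of Theorem~\ref{Main1}, and you flesh out that analogy faithfully, including the reduction to the layer $t=0$, the inclination estimates from Section~\ref{Sec4}, the general-$D$ step via backward iterates, and the Remark~\ref{R1} domain bookkeeping. Your observation that the estimates only exploit transversality of $\Delta$ to $W^s(p)$ at $q$, not the inclusion $\Delta\subset W^u(p)$, is also accurate and explains why the weaker hypothesis of Theorem~\ref{Main4} suffices.
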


In particular if $\Delta\subset W^u(p)$ (resp. $\Delta\subset W^u(r)$ for some other hyperbolic saddle $r$), then we have an \emph{homoclinic} (resp. \emph{heteroclinic}) \emph{tangle}, but this time without the need to take a perturbation $\mathcal{Z}_\varepsilon$ of $\mathcal{Z}$. Moreover, similarly to Proposition~\ref{MainCoro}, the accumulation provided by Theorem~\ref{Main4} also occurs in the extended phase space, similarly to Figure~\ref{Fig13}. 

Observe that by reversing the independent time variable $\tau$, a result similar to Theorem~\ref{Main4} also holds if we consider $(q,t_0)\in W^u(\gamma)\setminus\gamma$ instead.

Suppose now that for some $t_0\in\mathbb{S}^1$ we have two hyperbolic saddles $p$, $r\in\mathbb{R}^n\setminus\Omega^{t_0}$ of the Poincaré map $P^{t_0}$, obtained from two different hyperbolic periodic orbits of saddle type $\gamma^p$ and $\gamma^r$ of $\mathcal{Z}$. We say that $p\sim r$ if $W^u(p)$ and $W^s(r)$ have a transversal intersection in $\mathbb{R}^n\setminus\Omega^{t_0}$. In the next result we prove that the relation $\sim$ is transitive (see Figure~\ref{Fig11}).
\begin{figure}[ht]
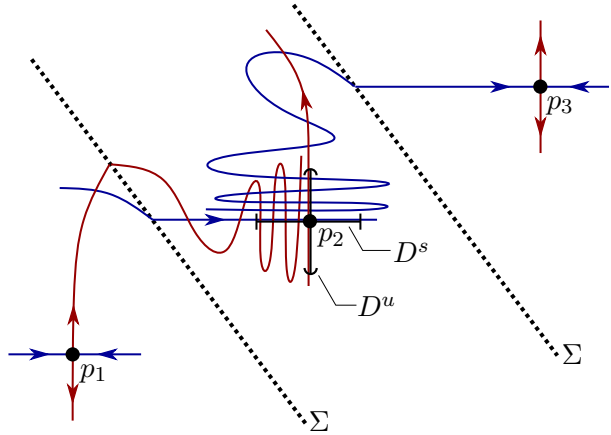

	\begin{center}
		\begin{overpic}[width=8cm]{Fig14.eps} 
			\put(12,8){$p_1$}
			\put(51.5,30.5){$p_2$}
			\put(89.5,52.5){$p_3$}
			\put(64,27){$D^s$}
			\put(58,18){$D^u$}
			\put(50,-1){$\Sigma$}
			\put(92,10){$\Sigma$}
		\end{overpic}
	\end{center}
	\caption{Illustration of Corollary~\ref{Coro1}. The stable (resp. unstable) sets of $p_1$, $p_2$ and $p_3$ is represented in blue (resp. red). Colors available in the online version}\label{Fig11}
\end{figure} 
For simplicity we drop $t_0$ from the notation and make the identification $\Omega^{t_0}\approx\Sigma$.

\begin{proposition}\label{Coro1}
	Let $p_1$, $p_2$ and $p_3\in\mathbb{R}^n\setminus\Sigma$ be three hyperbolic saddles of the Poincaré map $P$, obtained from three periodic hyperbolic orbits of $\mathcal{Z}$ of saddle type. If $p_1\sim p_2$ and $p_2\sim p_3$, then $p_1\sim p_3$.
\end{proposition}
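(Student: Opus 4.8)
The plan is to derive transitivity from the piecewise-smooth $\lambda$-lemma (Theorem~\ref{Main4}) combined with the $\mathcal{C}^1$-openness of transversality, mimicking the classical argument. First I would unwind the hypotheses. From $p_1\sim p_2$ there is a point $q_{12}\in W^u(p_1)\cap W^s(p_2)$ at which the intersection is transversal and $q_{12}\notin\Sigma$; since $\Sigma$ is closed, a sufficiently small $u$-disk $\Delta\subset W^u(p_1)$ containing $q_{12}$ is still transverse to $W^s(p_2)$ and disjoint from $\Sigma$. Similarly, from $p_2\sim p_3$ there is a transversal intersection point $q_{23}\in W^u(p_2)\cap W^s(p_3)$ with $q_{23}\notin\Sigma$, and a small $u$-disk $D^u\subset W^u(p_2)$ around $q_{23}$ which is transverse to $W^s(p_3)$ at $q_{23}$ and satisfies $D^u\cap\Sigma=\emptyset$. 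I would also record that near $q_{23}$ the set $W^s(p_3)$ is an embedded smooth submanifold, because it can fail to be smooth only on $\Sigma$. (If the three saddles do not share the same unstable index, one instead takes $\Delta$ of dimension $\dim W^u(p_2)$ transverse inside $W^u(p_1)$ to $W^u(p_1)\cap W^s(p_2)$, and similarly for $D^u$; transversality forces the indices to be non-increasing along the chain, so this is possible.)

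Next I would apply Theorem~\ref{Main4} with $p_2$ playing the role of the hyperbolic saddle: $\Delta\subset W^u(p_1)$ is a $u$-disk meeting $W^s(p_2)$ transversely at $q_{12}\notin\Omega^{t_0}$, and $D^u\subset W^u(p_2)$ is a $u$-disk with $D^u\cap\Omega^{t_0}=\emptyset$, so the theorem guarantees that $\bigcup_{n\geqslant0}P^n(\Delta)$ contains $u$-disks arbitrarily $\mathcal{C}^1$-close to $D^u$. Each such disk sits inside some iterate $P^n(\Delta)$; since the unstable set is forward invariant under $P$ (it is the flow-saturation of $W_{loc}^u$, so $P(W^u(p_1))\subset W^u(p_1)$), every one of these approximating disks is contained in $W^u(p_1)$.

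Finally I would invoke that a transversal intersection with a smooth submanifold is a $\mathcal{C}^1$-open condition: since $D^u$ meets $W^s(p_3)$ transversely at $q_{23}$, a point where $W^s(p_3)$ is smooth, any $u$-disk $\mathcal{C}^1$-close enough to $D^u$ also meets $W^s(p_3)$ transversely, at some point $q_{13}$ near $q_{23}$; moreover $q_{13}\notin\Sigma$ because $\Sigma$ is closed and $q_{23}\notin\Sigma$. Picking such a disk from $\bigcup_{n\geqslant0}P^n(\Delta)$, which lies in $W^u(p_1)$, exhibits a transversal intersection of $W^u(p_1)$ with $W^s(p_3)$ in $\mathbb{R}^n\setminus\Sigma$, that is, $p_1\sim p_3$.

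The step I expect to require the most care is not the dynamics but the bookkeeping of domains: Theorem~\ref{Main4} is stated relative to a neighborhood $U^{t_0}$ of a prescribed orbit whose forward trajectory meets $\Omega$ only at crossing points, so one must check that $q_{12}$, $\Delta$ and $D^u$ can be taken inside a neighborhood of this type associated with the periodic orbit of $p_2$, and that the finitely many iterates $P^n(\Delta)$ used to approximate $D^u$ remain in the region where the Poincar\'e map is well defined and solutions are unique --- precisely the caveat of Remark~\ref{R1}. Under the standing crossing-only assumptions of Section~\ref{Sec5} this goes through, but it is the place where the piecewise-smooth structure, rather than the smooth $\lambda$-lemma alone, is actually used.
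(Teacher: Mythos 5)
Your argument is correct and rests on the same two ingredients as the paper's proof --- the piecewise smooth $\lambda$-lemma plus $\mathcal{C}^1$-openness of transversality at points off $\Sigma$ --- but with a different geometric configuration, and the difference is worth noting. The paper applies the inclination lemma twice and symmetrically: it fixes small disks $D^u\subset W^u(p_2)$ and $D^s\subset W^s(p_2)$ through the middle saddle $p_2$ itself, brings $W^u(p_1)$ $\mathcal{C}^1$-close to $D^u$ by forward iteration (using $p_1\sim p_2$), brings $W^s(p_3)$ $\mathcal{C}^1$-close to $D^s$ by backward iteration (using $p_2\sim p_3$ and the time-reversed lemma), and then reads the desired intersection off the automatic transversal crossing of $D^u$ and $D^s$ at $p_2\notin\Sigma$. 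You instead apply the lemma only once: your target disk $D^u$ lies in $W^u(p_2)$ around the heteroclinic point $q_{23}$, which already meets $W^s(p_3)$ transversely, and you pass the transversality to a $\mathcal{C}^1$-nearby disk coming from $W^u(p_1)$. Your route is slightly more economical (one application of the $\lambda$-lemma, no backward-time version needed, no simultaneous two-sided approximation argument), whereas the paper's symmetric version keeps everything localized near the fixed point $p_2$, which makes disjointness from $\Sigma$ and the smoothness of the local manifolds immediate, at the cost of a second application of the lemma. Your parenthetical treatment of mismatched unstable indices and your closing caveat about controlling the domains of the Poincar\'e map (Remark~\ref{R1}) are both correct and make explicit points the paper leaves implicit.
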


\begin{proof}
	Let $D^s\subset W^s(p_2)\setminus\Sigma$ (resp. $D^u\subset W^u(p_2)\setminus\Sigma$) be a small enough $s$-neighborhood (resp. $u$-neighborhood) of $p_2$. Since $p_1\sim p_2$, we have from Theorem~\ref{Main1} that $W^u(p_1)$ contain $u$-disks arbitrarily $\mathcal{C}^1$-close to $D^u$. Similarly, from the condition $p_2\sim p_3$, we have that $W^s(p_3)$ contains $s$-disks arbitrarily $\mathcal{C}^1$-close to $D^s$. Since $D^u$ and $D^s$ intersects transversely at $p_2$, it follows that $W^u(p_1)$ and $W^s(p_3)$ must also intersect transversely at a point $q$ arbitrarily close to $p_2$ and in particular in $\mathbb{R}^n\setminus\Sigma$.
\end{proof}

It follows from Proposition~\ref{Coro1} that if the Poincaré map $P$ have hyperbolic saddles $p_1,\dots,p_n$ such that $p_i\sim p_{i+1}$, $i\in\{1\dots,n-1\}$, and $p_n\sim p_1$, then $p_i\sim p_j$ for any two $i$, $j\in\{1,\dots,n\}$, included the case $i=j$. That is, if the hyperbolic saddles form a ``circular structure", then we have a \emph{mutual entanglemen} between all of them (see Figure~\ref{Fig14}).
\begin{figure}[ht]
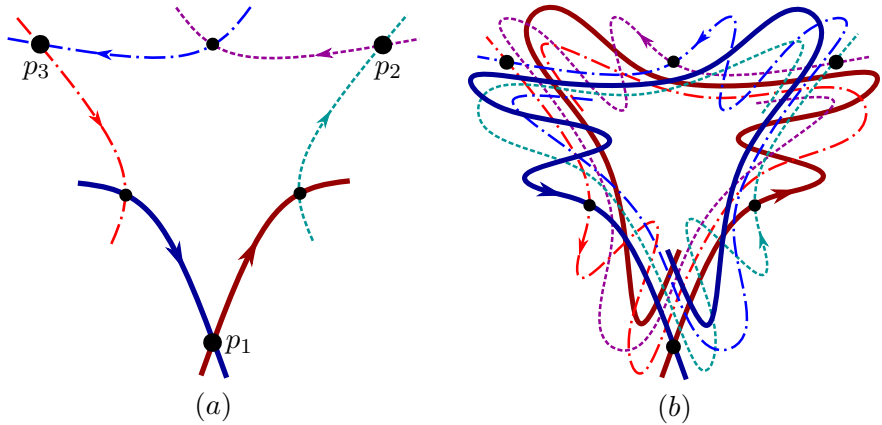

	\begin{center}
		\begin{minipage}{6cm}
			\begin{center} 
				\begin{overpic}[width=5.5cm]{Fig18.eps} 
					\put(53,7.5){$p_1$}
					\put(89,74){$p_2$}
					\put(4,74){$p_3$}
				\end{overpic}
				
				$(a)$
			\end{center}
		\end{minipage}
		\begin{minipage}{6cm}
			\begin{center} 
				\begin{overpic}[width=5.5cm]{Fig17.eps} 
				\end{overpic}
				
				$(b)$
			\end{center}
		\end{minipage}
	\end{center}
	\caption{$(a)$ illustrates the circular structure formed by three saddle points and their stable and unstable sets, while $(b)$ shows the mutual entanglement among all parties in the case $n=3$. For simplicity, discontinuities are not depicted in the illustration. Colors available in the online version.}\label{Fig14}
\end{figure}
Moreover, similarly to Proposition~\ref{MainCoro}, this mutual entanglement also occurs in the extended phase space, similarly to Figure~\ref{Fig13}. 

We now observe that the hypothesis of having a piecewise diffeomorphism $P\colon U\to V$ \emph{embedded} in a flow of an autonomous piecewise smooth vector field $\mathcal{Z}$ is essential to reduce the proof of Theorem~\ref{Main1} to a local one. Indeed, the key fact for this claim follows from Proposition~\ref{P2}, which proves that $P^N$ is a diffeomorphism in a neighborhood of $q$, provided $q\not\in\Sigma$ and $P^N(q)\not\in\Sigma$. In particular, $P^N$ is a diffeomorphism even if $P^k(q)\in\Sigma$ for some $k\in\{1,\dots,n-1\}$ and thus it is not necessary to have previous information about the orbit of $q$.

Nevertheless, we can state a \emph{$\lambda$-Lemma for Piecewise Smooth Maps}, provided we add this information on the hypothesis. More precisely, given a smooth manifold $M$ of dimension $m$ and immersed sub-manifolds $\Sigma_1,\dots,\Sigma_k$, each one of codimension at least one, set $\Sigma=\cup_{i=1}^{k}\Sigma_i$ and let $\operatorname{Diff}_{\Sigma}(M)$ be the set of homeomorphisms $F\colon M\to M$ such that $F$ is a smooth diffeomorphism when restricted to $M\setminus\bigl(\Sigma\cup F^{-1}(\Sigma)\bigr)$. Let also $\Lambda:=\cup_{n=0}^{\infty} F^{-n}(\Sigma)$.

\begin{main}\label{Main2}[$\lambda$-lemma for piecewise smooth maps]
	Let $p\in M\setminus\Sigma$ be a hyperbolic saddle of $F\in\operatorname{Diff}_{\Sigma}(M)$ and $D\subset W^u(p)$ a $u$-disk. Let $\Delta$ be a $u$-disk intersecting $W^s(p)$ transversely at some point $q\not\in\Lambda$. Then $\cup_{n=0}^{\infty}P^n(\Delta)$ contains $u$-disks arbitrarily $\mathcal{C}^1$-close (resp. $\mathcal{C}^0$-close) to $D$ if $D\cap\Lambda=\emptyset$ (resp. $D\cap\Lambda\neq\emptyset$).
\end{main}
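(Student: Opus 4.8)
\textbf{Proof proposal for Theorem~\ref{Main2}.}
The plan is to mimic, step by step, the proof of Theorem~\ref{Main1}, replacing the Poincar\'e map by the piecewise smooth diffeomorphism $F$ and substituting for the use of Propositions~\ref{P1}--\ref{P2} (which exploited that the map is embedded in a flow) the two hypotheses $q\notin\Lambda$ and $D\cap\Lambda=\emptyset$: these encode exactly the orbit information that, in the flow setting, came for free. The first step is the reduction to a local statement near $p$. Since $p\in M\setminus\Sigma$ is fixed, $p=F(p)\notin\Sigma$, so $F$ is a smooth local diffeomorphism near $p$; shrinking, we fix a neighborhood $V$ of $p$ with $V\cap\Sigma=\emptyset$ and $F(V)\cap\Sigma=\emptyset$, so $F|_V$ is smooth, and apply the Stable Manifold Theorem at $p$ to obtain adapted coordinates $(x_s,x_u)$ on $V$ with $p$ the origin, $W^s(p)\cap V=\{x_u=0\}$ and $W^u(p)\cap V=\{x_s=0\}$. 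Since $q\in W^s(p)$ we have $F^n(q)\to p$, hence $F^N(q)\in V$ for some $N$; and since $q\notin\Lambda$, the finite orbit segment $q,F(q),\dots,F^N(q)$ misses $\Sigma$, so $F^N$ is a composition of local smooth diffeomorphisms and hence a $\mathcal{C}^1$-diffeomorphism on a neighborhood $B$ of $q$ with $B\cap\Sigma=\emptyset$. Since $W^{s}(p)$ and $W^u(p)$ are $F$-invariant, $F^N(\Delta)\subset W^u(p)$ is a $u$-disk meeting $W^s(p)$ transversely at $F^N(q)\in V\setminus\Sigma$, so after replacing $\Delta$ by a subdisk of $F^N(\Delta)$ and $q$ by $F^N(q)$ we may assume $\Delta\subset W^u(p)\cap V$, $q\in V\setminus\Sigma$.

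With this reduction in hand, the core is the classical inclination estimate, carried out exactly as in the proof of Theorem~\ref{Main1}. Writing $F$ in the coordinates $(x_s,x_u)$ as $F(x_s,x_u)=(A^sx_s+F_s,A^ux_u+F_u)$ with $|A^s|\le a<1$, $|(A^u)^{-1}|\le a<1$, $F_s(0,\cdot)\equiv0$, $F_u(\cdot,0)\equiv0$ and vanishing first derivatives at the origin (cf.\ \eqref{4}--\eqref{5}), one shrinks $V$ so that the constants in \eqref{5} hold, shows that the inclinations $\lambda_n$ of the tangent vectors to $F^n(\Delta)$ at $F^n(q)$ obey \eqref{9} and hence \eqref{6}--\eqref{7}, and then, for a thin box $V_1=(-\delta,\delta)^s\times D$ around a prescribed small disk $D\subset W^u(p)\cap V$ with $D\cap\Sigma=\emptyset$, repeats the estimates \eqref{10}--\eqref{11} to conclude that a suitable subdisk $F^{n}(\Delta_{n})$ lies in the $\eta$-neighborhood of $D$ in the $\mathcal{C}^1$-topology. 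The only bookkeeping difference from the flow case (Remark~\ref{R1}) is that here $F$ is a genuine homeomorphism of $M$, so all iterates exist unconditionally; one merely shrinks $\Delta_n$ so that $F^k(\Delta_n)\subset V_1$ for $k=1,\dots,n$, which is possible because the reference orbit $F^k(q_{n_1})$ stays in $V_1$ for all $k\ge0$, lying on the locally contracted manifold $W^s(p)\cap V$.

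The passage from small $D$ to an arbitrary $u$-disk $D\subset W^u(p)$ is handled as at the end of the proof of Theorem~\ref{Main1}. Since $D\subset W^u(p)$, the backward iterates $F^{-m}(D)$ shrink toward $p$, so for large $m$ the disk $F^{-m}(D)\subset V$ is small and the previous case applies to it. If $D\cap\Lambda=\emptyset$, then, using that $W^u(p)$ is $F$-invariant and the definition of $\Lambda$, the orbit segment joining $F^{-m}(D)$ to $D$ avoids $\Sigma$, so $F^{-m}$ is a $\mathcal{C}^1$-diffeomorphism on a neighborhood of $D$; taking $F^{n}(\Delta_n)$ arbitrarily $\mathcal{C}^1$-close to $F^{-m}(D)$ and pushing forward by $F^{m}$ gives $u$-disks in $\bigcup_{n}F^n(\Delta)$ arbitrarily $\mathcal{C}^1$-close to $D$. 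If instead $D\cap\Lambda\neq\emptyset$, then $F^{-m}$ is only a homeomorphism near $D$ and the same argument yields $\mathcal{C}^0$-closeness.

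I expect the main obstacle to be precisely this localization step. In the flow setting Proposition~\ref{P2} guarantees that a time-$T$-map is a smooth diffeomorphism near a point as soon as its two endpoints avoid $\Sigma$, irrespective of intermediate crossings; but a piecewise smooth diffeomorphism loses differentiability at every preimage of $\Sigma$, so nothing can be said about $F^N$ near $q$ without controlling the entire forward orbit segment of $q$. The hypothesis $q\notin\Lambda$ is exactly the minimal such control, and once it places us in a neighborhood of $p$ on which $F$ is smooth, the remainder is the classical $\lambda$-Lemma computation verbatim.
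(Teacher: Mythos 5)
Your proposal faithfully reproduces the strategy that the paper itself only sketches (``proceeds similarly to Theorem~\ref{Main1}\dots''): localize near $p$ using $q\notin\Lambda$ to guarantee the forward orbit segment $q,F(q),\dots,F^N(q)$ misses $\Sigma$, run the classical inclination estimate inside $V$, and then propagate from a small disk near $p$ to a general $D$. The localization step and the inclination computation are fine and match the paper's intent exactly.

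The gap is in the final ``general $D$'' step, and it is a real one. You write that if $D\cap\Lambda=\emptyset$ then ``the orbit segment joining $F^{-m}(D)$ to $D$ avoids $\Sigma$''. This does not follow. By definition $\Lambda=\bigcup_{n\geqslant0}F^{-n}(\Sigma)$, so $D\cap\Lambda=\emptyset$ is equivalent to $F^n(D)\cap\Sigma=\emptyset$ for all $n\geqslant0$: it controls the \emph{forward} orbit of $D$. What you actually need for $F^m$ to be a $\mathcal{C}^1$-diffeomorphism on a neighborhood of $F^{-m}(D)$ is that the finite \emph{backward} orbit segment $D,F^{-1}(D),\dots,F^{-m}(D)$ avoids $\Sigma$, equivalently $D\cap\bigl(\bigcup_{k=0}^{m}F^{k}(\Sigma)\bigr)=\emptyset$. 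Forward avoidance says nothing about this; a point of $F^{-1}(D)$ can perfectly well lie in $\Sigma$ while the entire forward orbit of $D$ misses it. In the proof of Theorem~\ref{Main1} this issue simply does not arise because Proposition~\ref{P2} (which uses the embedding in a flow) guarantees that the time-$T$-map is smooth once the two \emph{endpoints} are off $\Sigma$, regardless of intermediate crossings; for a general $F\in\operatorname{Diff}_{\Sigma}(M)$ there is no such shortcut and every intermediate iterate must be controlled. So the invocation of ``$W^u(p)$ is $F$-invariant and the definition of $\Lambda$'' does not close the argument: invariance places $F^{-k}(D)$ inside $W^u(p)$ but says nothing about whether it meets $\Sigma$. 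To make this step correct one needs to assume $D\cap\bigl(\bigcup_{n\geqslant0}F^{n}(\Sigma)\bigr)=\emptyset$ (backward preimages of $D$ avoid $\Sigma$), not $D\cap\Lambda=\emptyset$. Note that the paper's own statement of Theorem~\ref{Main2} uses the same condition $D\cap\Lambda=\emptyset$, and its one-sentence proof sketch is silent on this point; so you are not making an error absent from the source, but you are asserting as established precisely the implication that fails.
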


The proof of Theorem~\ref{Main2} proceeds similarly to that of Theorem~\ref{Main1}. The main difference lies in the fact that we now require the stronger assumption $q\not\in\Lambda$, instead of $q\not\in\Sigma$, in order to reduce the argument to a local one.

Finally, we observe that since $\Lambda$ is a countably union of sub-manifolds, it can be dense in $M$. Moreover, from a probabilistic point of view, even if $M$ is endowed with a measure $\mu$, $F$ is measurable, and $\mu(\Sigma)=0$, it is not clear if $\mu(\Lambda)=0$. See~\cite{Luzin} and the references therein. Thus, the hypothesis of $q\not\in\Lambda$ might be too restrictive. This, in fact, highlights the strength of assuming that the piecewise smooth diffeomorphism is embedded in a flow of a piecewise smooth vector field.

\section{Conclusion and further steps}

It follows from Theorems~\ref{Main1}, \ref{Main3}, and~\ref{Main4} that the homoclinic and heteroclinic tangles of smooth \emph{vector fields} have their clear counterparts in the piecewise smooth framework. However, in case of smooth \emph{maps}, it follows from Theorem~\ref{Main2} and the discussion made at it that its counterpart might need stronger hypothesis. Whether the assumptions made in Theorem~\ref{Main2} are minimal is left as an open problem. 

Moreover it is clear from Proposition~\ref{Coro1} and its consequences, illustrated in Figure~\ref{Fig14}, that these entanglements may result in chaos. Therefore as a next step the authors will push for a piecewise smooth counterpart of the seminal Birkoff-Smale Homoclinic Theorem, specially for vector fields. Similar to the smooth framework, we hope that the results proved in this paper shall be useful in this next step. 

\section*{Acknowledgments}

This work is supported by CNPq, grant 304798/2019-3, by Agence Nationale de la Recherche (ANR), project ANR-23-CE40-0028, S\~ao Paulo Research Foundation (FAPESP), grants 2021/01799-9 and 2023/02959-5, and PROPe-UNESP, project 5482.

\end{document}